\def\Re{\text{Re\ }}
\def\R{\mathbb{R}}
\def\C{\mathbb{C}}
\def\fra{\mathfrak{a}}
\def\frb{\mathfrak{b}}
\newtheorem{lem}{Lemma}[section]
\newtheorem{cor}[lem]{Corollary}
\newtheorem{prop}[lem]{Proposition}
\newtheorem{teo}[lem]{Theorem}
\newtheorem{df}[lem]{Definition}
\newtheorem{remark}[lem]{Remark}
\numberwithin{equation}{section}
\begin{document}

\title{Non-autonomous right and left multiplicative perturbations and maximal regularity}
\author{Mahdi Achache  and  El Maati Ouhabaz \thanks{\noindent Univ. Bordeaux, Institut de Math\'ematiques (IMB). CNRS UMR 5251. 351,  
Cours de la Lib\'eration 33405 Talence, France.
 Mahdi.Achache@math.u-bordeaux.fr, Elmaati.Ouhabaz@math.u-bordeaux.fr. 
 \newline Research partially supported by the ANR project  HAB: ANR-12-BS01-0013-02.}}
\date{}

\maketitle
\begin{abstract}
We consider the problem of  maximal regularity  for non-autonomous Cauchy problems
$$ u'(t) + B(t) A(t) u(t) + P(t) u(t) = f(t), \ u(0) = u_0$$
and 
$$ u'(t) + A(t) B(t) u(t) + P(t) u(t) =  f(t), \ u(0) = u_0.$$
In both cases, the time dependent operators $A(t)$ are associated with a family of sesquilinear forms  and the multiplicative left or right perturbations $B(t)$ as well as the additive perturbation $P(t)$ are families  of bounded operators on the considered Hilbert space. We prove maximal $L_p$-regularity results and  other regularity properties for the solutions of the previous problems under minimal regularity assumptions on the forms and perturbations. 
\vspace{.5cm}

\noindent \textbf{keywords:} Maximal regularity, non-autonomous evolution equations, multiplicative and additive perturbations.\\
\textbf{Mathematics Subject Classification (2010):} 35K90, 35K45, 47D06.
\end{abstract}

\section{Introduction}\label{Sec1}
The present  paper deals with maximal $L_p$-regularity for non-autonomous evolution equations in the  setting of Hilbert spaces.  Before explaining our results we introduce some notations and assumptions.\\
 Let 
 $( \mathcal{ H}, (\cdot, \cdot ),\| \cdot \|)$ be a Hilbert space over $\R$ or $\C$. We consider another Hilbert space $\mathcal{V}$ which is densely and continuously embedded into $\mathcal{H}$. We denote by $\mathcal{V}'$ the (anti-) dual space of $\mathcal{V}$ so that 
$$\mathcal{ V} \hookrightarrow_{d} \mathcal{ H} \hookrightarrow_{d}  {\mathcal{V}}'.$$
We  denote by $\langle , \rangle$ the duality $\mathcal{V}$-$\mathcal{V}'$ and note that 
$\langle \psi, v \rangle = (\psi, v)$ if $\psi, v \in \mathcal{H}$.  
We consider a family of sesquilinear forms $$   \fra :[0,\tau]\times \mathcal{V} \times  \mathcal{V}  \rightarrow  \C $$ such that \begin{itemize}
 \item{} [H1]: $D(\fra(t))= \mathcal{V}$ (constant form domain),
 \item{}[H2]: $ | \fra(t,u,v)|\leq M \| u \|_\mathcal{ V} \| v \|_\mathcal{ V}$ (uniform boundedness),
\item{}[H3]: $\Re \fra(t,u,u)+\nu \| u \|^2\geq \delta  \| u \|_\mathcal{ V}^2 \ (\forall u\in{\mathcal V})\  \text{for some} \  \delta > 0$ \text{and some } $\nu \in  \R $ (uniform quasi-coercivity).
\end{itemize}
Here and throughout this paper, $\| \cdot \|_{\mathcal{V}}$ denotes the norm of $\mathcal{V}$. 

 To each form $\fra(t)$ we can associate two operators $A(t)$ and $\mathcal{A}(t)$ on $\mathcal{H}$ and $\mathcal{V}'$, respectively.  Recall that $u \in \mathcal{H}$ is in the domain $D(A(t))$
     if there exists $h\in  \mathcal{H}$ such that for all $v \in  \mathcal{V}$:
     $\fra(t,u,v) = (h, v)$.  We then set $A(t)u := h$. The operator $\mathcal{A}(t)$ is a bounded operator from $\mathcal{V}$ into $\mathcal{V}'$ such that $ \mathcal{A}(t)u = \fra(t, u, \cdot)$. The operator  $A(t)$ is the part of $\mathcal{A}(t)$ on $\mathcal{H}$. 
 It is a  classical fact that $-A(t)$ and $-\mathcal{A}(t)$ are both generators of holomorphic semigroups $(e^{-rA(t)})_{r\ge0}$
 and $(e^{-r\mathcal{A}(t)})_{r\ge0}$ on $\mathcal{H}$ and $\mathcal{V}'$, respectively. The semigroup $e^{-rA(t)}$ is the restriction
 of $e^{-r\mathcal{A}(t)}$ to $\mathcal{H}$. In addition,  $e^{-rA(t)}$ induces a holomorphic  semigroup on $\mathcal{V} $ (see, e.g.,  Ouhabaz \cite[Chapter 1]{Ouh05}). 

A well known result by J.L. Lions asserts that  the Cauchy problem
\begin{equation}\label{Cauchy1}
u'(t) + \mathcal{A}(t) u(t) = f(t), \ u(0) = u_0 \in \mathcal{H}
\end{equation}
has maximal $L_2$-regularity in $\mathcal{V}'$, that is, for every $f \in L_2(0, \tau; \mathcal{V}')$  there exists a unique
$u \in W^1_2(0, \tau; \mathcal{V}')$ which satisfies \eqref{Cauchy1} in the $L_2$-sense. The maximal regularity in $\mathcal{H}$ is however more interesting since when dealing with boundary value problems one cannot identify the boundary conditions if the Cauchy problem is considered in $\mathcal{V}'$. The maximal regularity in $\mathcal{H}$ is more difficult to prove. J.L. Lions has proved that this is the case for  initial data $u_0 \in D(A(0))$ under a quite restrictive regularity condition, namely $t \mapsto \fra(t,g,h)$ is $C^2$ (or $C^1$ if $u_0 = 0$). It was a question by him in 1961 (see \cite{Lions:book-PDE} p. 68) whether maximal $L_2$-regularity holds in general in $\mathcal{H}$. 

A lot of progress have been made in recent years on this problem. It was proved by Ouhabaz and Spina  \cite{OS} that maximal 
$L_p$-regularity holds in  $\mathcal{H}$ if  $t \mapsto \fra(t,g,h)$ is $C^\alpha$
for some $\alpha > 1/2$ (for all $g, h \in \mathcal{V}$). This result is however proved for the case  $u_0 = 0$ only. In Haak and Ouhabaz \cite{HO15}, it is proved that 
for $u_0 \in (\mathcal{H}, D(A(0)))_{1-\frac{1}{p}, p}$ and 
\begin{equation}\label{hou0}
| \fra(t,g,h) - \fra(s,g,h) | \le \omega( |t-s|) \| h \|_{\mathcal{V}} \| g \|_{\mathcal{V}}
\end{equation}
for some non-decreasing function $\omega$ such that
\begin{equation}\label{hou1}
\int_0^\tau \frac{\omega(t)}{t^{\frac{3}{2}}} dt < \infty \ \text{and} \ \int_0^\tau \left(\frac{\omega(t)}{t} \right)^p dt < \infty,
\end{equation}
then the Cauchy problem \eqref{Cauchy1} has maximal $L_p$-regularity in $\mathcal{H}$. The condition \eqref{hou1}  can be improved if  \eqref{hou0} holds with norms in some complex interpolation spaces (see Arendt and Monniaux \cite{AM} and Ouhabaz \cite{Ou15}). It was observed by Dier \cite{Dier14} that the answer to Lions' problem is negative in general. His example is based on non-symmetric forms for which  the Kato square root property $D(A(t))^{1/2}) = \mathcal{V}$ is not satisfied. Recently, Fackler \cite{Fac}   proved a negative answer to the maximal regularity problem for forms which are $C^\alpha$ for any $\alpha < 1/2$ (even symmetric ones). Let us also mention a recent positive result of Dier and Zacher \cite{DZ} on maximal $L_2$-regularity  in which the condition
\eqref{hou1} is replaced by a  norm in a Sobolev space  of order $ > \frac{1}{2}$.  For forms associated with divergence form elliptic operators, Auscher and Egert \cite{AE} proved that the order of this Sobolev space can be $\frac{1}{2}$. 

One of the aims of the present paper is to study the same problem for multiplicative perturbations. More precisely, we study
maximal $L_p$-regularity for 
\begin{equation}\label{Cauchy2}
u'(t) + B(t) A(t) u(t) + P(t) u(t) = f(t), \ u(0) = u_0
\end{equation}
and also for 
\begin{equation}\label{Cauchy3}
u'(t) + A(t) B(t) u(t) + P(t) u(t) =  f(t), \ u(0) = u_0,
\end{equation}
where $B(t)$ and $P(t)$ are bounded operators on $\mathcal{H}$ such that $\Re (B(t)^{-1} g,g) \ge \delta \| g \|^2$ for some $\delta > 0$ and all $g \in \mathcal{H}$. The left perturbation problem \eqref{Cauchy2} was already considered  by Arendt et al. \cite{ADLO} and the right perturbation one \eqref{Cauchy3} by Augner et al. \cite{JL15}. The two problems are motivated by applications to semi-linear evolution equations and boundary value problems. We extend the results in  \cite{ADLO} and \cite{JL15} in three   directions. The first one is to consider general forms which may not satisfy the Kato square root property, a condition which was used in an essential way  in the previous two papers. The second direction is to deal with maximal $L_p$-regularity, whereas in the mentioned papers only the maximal $L_2$-regularity is  considered. The third direction, which is our main motivation, is to assume less regularity on the forms
$\fra(t)$ with respect to $t$. In both papers \cite{ADLO} and \cite{JL15} it is assumed that $t \mapsto \fra(t,g,h)$ is Lipschitz continuous on $[0, \tau]$. In applications to elliptic operators with time dependent coefficients, the regularity assumption on the forms 
 reflects the regularity needed  for  coefficients with respect to $t$.
 
Our main results can be summarized as follows (see Theorems \ref{thmain} and \ref{thmright} for more general and precise statements). Suppose that for some $\beta, \gamma \in [0, 1]$, 
\begin{equation*}
|\fra(t, g,h)- \fra(s, g,h)| \leq \omega(|t-s|)\|g\|_{\mathcal{[H,V]}_{\beta}}  \|h\|_{\mathcal{[H,V]}_{\gamma}}, \ u, v \in \mathcal{V}
\end{equation*}
where $\omega: [0,\tau] \to  [0,\infty)$ is a non-decreasing function such that 
\begin{equation*}
\int_0^{\tau} \frac{\omega(t)}{t^{1+\frac{\gamma}{2}}}dt <\infty.
\end{equation*}
Suppose also that $t \mapsto B(t)$ is continuous on $[0, \tau]$ with values in $\mathcal{L(H)}$. Then the Cauchy problem \eqref{Cauchy2} has 
 maximal $L_p$-regularity in $\mathcal{H}$ for all $p \in (1, \infty)$ when $u_0 = 0$. 
If in addition, 
\begin{equation}\label{eq0000} 
\int_{0}^{\tau}\frac{\omega(t)^{p}}{t^{\frac{1}{2}(\beta+p \gamma)}}dt<\infty
\end{equation}
then \eqref{Cauchy2} 
 has maximal $L_p$-regularity in $\mathcal{H}$ provided $u_0 \in (\mathcal{H}, D(A(0)))_{1-\frac{1}{p},p}$. \\
 We also prove that if $\omega(t) \le C t^{\varepsilon}$  for some $\varepsilon > 0$ and $D(A(t)^{1/2}) = \mathcal{V}$ for all $t \in [0, \tau]$, then the solution $u \in C([0, \tau]; \mathcal{V})$ and $s \mapsto A(s)^{1/2} u(s) \in C([0, \tau]; \mathcal{H})$. 
 
 Concerning \eqref{Cauchy3}, we assume as in \cite{JL15} that $t \mapsto B(t)$ is Lipschitz continuous on $[0, \tau]$ with values in $\mathcal{L(H)}$. The assumptions on $\fra(t)$ are the same as above. The maximal $L_p$-regularity results we prove are the same as previously. We could also consider both left and right perturbations, see the end of Section \ref{sec4}. 
 
 We point out in passing that condition \eqref{eq0000}  is slightly better than the second condition in \eqref{hou1} which was assumed in \cite{HO15} and \cite{Ou15} (for the unperturbed problem).  In the natural case $\omega(t) \sim t^\alpha$,  one 
sees   immediately  that for large $p$, \eqref{hou1} requires larger $\alpha$ (and then more regularity) than \eqref{eq0000}. 
 
 In order to prove our results we  follow similar ideas as in \cite{HO15} and \cite{Ou15}. However, several modifications are needed in order to deal with  multiplicative perturbations. Also, at several places we  appeal to classical tools from harmonic analysis such as square function estimates or  H\"ormander type conditions for singular integral operators with vector-valued kernels. 
 
 Our results on maximal $L_p$-regularity could be applied to boundary values problems as well as to some semi-linear evolution equations. Such applications  have been already  considered in \cite{ADLO} and \cite{JL15}. The gain here is that we are able to 
assume less regularity with respect to the variable $t$.  We shall not write these applications  explicitly in this paper since the ideas are the same as in  \cite{ADLO} and \cite{JL15}, one has just to insert our new results on maximal regularity. The reader interested in applications of non-autonomous maximal regularity is referred to the previous articles and the references therein. \\
 
\noindent{\bf Notation.} We  denote by $\mathcal{L}(E,F)$ (or $\mathcal{L}(E)$) the space of bounded linear operators from $E$ to $F$ (from $E$ to $E$). The spaces $L_p(a,b; E)$ and $W^1_p(a,b; E)$ denote respectively the  Lebesgue and Sobolev spaces 
of function on $(a,b)$ with values in $E$. Recall that the norms of $\mathcal{H}$ and $\mathcal{V}$ are denoted by 
$\| \cdot \|$ and $\| \cdot \|_{\mathcal{V}}$. The scalar  product  of $\mathcal{H}$ is $(\cdot, \cdot)$.\\
Finally, we denote by $C$, $C'$ or $c...$  all inessential constants.  Their values may change from line to line. 

\section{The maximal regularity for the unperturbed problem}
Let $\mathcal{H}$ and $\mathcal{V}$ be as in the introduction. We consider  a family of sesquilinear forms
 $$\fra(t): \mathcal{V} \times  \mathcal{V} \to \C, \  t \in [0, \tau]$$
 which satisfy the classical assumptions [H1]-[H3]. We denote  again by $A(t)$ and $\mathcal{A}(t)$ the operators associated with 
 $\fra(t)$ on $\mathcal{H}$ and $\mathcal{V}'$, respectively. Note that by adding a positive constant to $\mathcal{A}(t)$ we may assume that
 [H3] holds with $\nu = 0$. 
  Therefore, there exists $w_0 \in [0, \frac{\pi}{2})$ such that 
\begin{equation}\label{eq3-1}
\fra(t,u,u)\in \overline{\Sigma(w_0)}, \ \ \forall t\in [0,\tau], u\in \mathcal{V}.
 \end{equation}
 Here
 $$\overline{\Sigma(w_0)} := \{z\in \mathbb{C}^{*} , | \arg (z)|\leq w_{0}\}.$$
 In \eqref{eq3-1}  we take $w_0$ to be the smallest possible value for  which the inclusion  holds. 

\begin{df}\label{df1}
Fix $u_0 \in {\mathcal H}$. We say that the problem
\begin{equation}\label{CP}
u'(t) + A(t) u(t) = f(t)\  (t \in [0, \tau]), \  u(0) = u_0
\end{equation}
has maximal $L_p$-regularity in $\mathcal{H}$ if for each $f\in L_p(0,\tau; \mathcal{H})$, there exists a unique $u\in W^{1}_{p}(0,\tau; \mathcal{H})$ such that $u(t)\in D(A(t))$ for almost all $t$ and  satisfies \eqref{CP} in the $L_p$-sense.
\end{df}
 We denote by $\mathcal{V}_\beta := [\mathcal{H}, \mathcal{V}]_\beta$   the classical complex interpolation space. Its usual norm is denoted $\| \cdot \|_{\mathcal{V}_\beta}$. We start with the following  result on maximal $L_p$-regularity of \eqref{CP}. 
 
\begin{teo}\label{thm3-1}
Suppose that the forms $(\fra(t))_{t\in [0,\tau]}$ satisfy the standing hypotheses [H1]-[H3]. Suppose that for some $\beta, \gamma \in [0, 1]$
\begin{equation}\label{eq3-2}
| \fra(t, u,v)- \fra(s, u,v)| \leq \omega(|t-s|)\|u\|_{\mathcal{V}_{\beta}}  \|v\|_{\mathcal{V}_{\gamma}}, \ u, v \in \mathcal{V},
\end{equation}
where $\omega: [0,\tau] \to  [0,\infty)$ is a  non-decreasing function such that 
$$\int_0^{\tau} \frac{w(t)}{t^{1+\frac{\gamma}{2}}}dt <\infty.$$
Then the Cauchy problem \eqref{CP} with $u_0 = 0$ has maximal $L_p$-regularity in $\mathcal{H}$ for all $p \in (1, \infty)$. \\
If in addition, 
\begin{equation}\label{eq3-3} 
\int_{0}^{\tau}\frac{w(t)^{p}}{t^{\frac{1}{2}(\beta+p \gamma)}}dt<\infty
\end{equation}
then  \eqref{CP} has maximal $L_p$-regularity in $\mathcal{H}$ for all $u_0\in (\mathcal{H}, D(A(0)))_{1-\frac{1}{p},p}$. Moreover there exists a positive constant C such that 
$$\| u \|_{W^1_p(0,\tau; \mathcal{H})} +\|A u\|_{L_p(0,\tau; \mathcal{H})} \leq C\left[\|f\|_{L_p(0,\tau; \mathcal{H})}+\|u_0\|_{( \mathcal{H},D(A(0)))_{1-\frac{1}{p},p}} \right].$$
Here, $( \mathcal{H},D(A(0)))_{1-\frac{1}{p},p}$ denotes the classical real-interpolation space and the constant $C$ depends only on the constants in [H1]-[H3].
\end{teo}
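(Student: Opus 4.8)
The plan is to establish the statement for $u_0=0$ first and then obtain the general initial-data case by a reduction. All estimates below must be uniform in $t,s\in[0,\tau]$, which is exactly what [H1]--[H3] provide: by \eqref{eq3-1} the family $(A(t))$ is uniformly sectorial of angle $w_0<\tfrac\pi2$, so each $-A(t)$ generates a bounded holomorphic semigroup of uniform type, and on the Hilbert space $\mathcal H$ the $A(t)$ enjoy uniform quadratic (square-function) estimates of McIntosh type. I will use the resulting moment bounds $\|A(t)^{\theta}e^{-rA(t)}\|_{\mathcal L(\mathcal H)}\le Cr^{-\theta}$ and their scale versions obtained by interpolating the endpoint semigroup estimates between $\mathcal H$ and $\mathcal V'$.

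For $u_0=0$, Lions' theorem already yields a unique $u\in W^1_2(0,\tau;\mathcal V')\cap L_2(0,\tau;\mathcal V)$, and the task is to show $A(\cdot)u(\cdot)\in L_p(0,\tau;\mathcal H)$. Freezing the operator at the endpoint $t$ and applying the variation-of-constants formula for the autonomous generator $A(t)$ gives
\begin{equation*}
A(t)u(t)=\int_0^t A(t)e^{-(t-s)A(t)}f(s)\,ds+\int_0^t A(t)e^{-(t-s)A(t)}\bigl(A(t)-A(s)\bigr)u(s)\,ds=:(Rf)(t)+(Su)(t).
\end{equation*}
Writing $v:=A(\cdot)u(\cdot)$ and $u(s)=A(s)^{-1}v(s)$ turns this into the fixed-point identity $v=Rf+\tilde S v$, where $(\tilde S v)(t)=\int_0^tK(t,s)v(s)\,ds$ with operator kernel $K(t,s)=A(t)e^{-(t-s)A(t)}(A(t)-A(s))A(s)^{-1}$. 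It thus suffices to prove that $R$ is bounded on $L_p(0,\tau;\mathcal H)$ and that $I-\tilde S$ is invertible there. For $R$ I would first obtain $L_2$-boundedness directly from the uniform quadratic estimates (the kernel is diagonal in the operator slot). To reach general $p\in(1,\infty)$ I view $R$ as a singular integral with the $\mathcal L(\mathcal H)$-valued kernel $A(t)e^{-(t-s)A(t)}\mathbf 1_{\{s<t\}}$ and verify a H\"ormander-type condition in both variables, the needed H\"older continuity in $t$ coming from analyticity together with the form regularity \eqref{eq3-2}; vector-valued Calder\'on--Zygmund theory then extrapolates the $L_2$-bound to every $L_p$.

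The core is the kernel estimate $\|K(t,s)\|_{\mathcal L(\mathcal H)}\le C\,\omega(t-s)\,(t-s)^{-1-\frac{\gamma}{2}}$ for $0<s<t\le\tau$. I would obtain it by factoring $K(t,s)$ through the interpolation scale: the form bound \eqref{eq3-2} states precisely that $A(t)-A(s):\mathcal V_\beta\to(\mathcal V_\gamma)'$ has norm $\le\omega(t-s)$; the smoothing estimate $\|A(t)e^{-(t-s)A(t)}\|_{(\mathcal V_\gamma)'\to\mathcal H}\le C(t-s)^{-1-\gamma/2}$ follows by interpolating the holomorphic semigroup bounds between the endpoints $\gamma=0$ (analyticity on $\mathcal H$) and $\gamma=1$ (the $\mathcal V'$-theory), hence needs no Kato identity; and $\|A(s)^{-1}\|_{\mathcal H\to\mathcal V_\beta}\le C$ since $\beta\le1$. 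Multiplying the three bounds gives the claim, so $\tilde S$ is a Volterra operator whose kernel is diagonally integrable exactly because $\int_0^\tau\omega(t)\,t^{-1-\gamma/2}\,dt<\infty$; thus on a short subinterval its $L_p$-norm is $<1$, $I-\tilde S$ is invertible there by Neumann series, and one patches to $[0,\tau]$ in finitely many steps. This proves maximal $L_p$-regularity for $u_0=0$. For general data I set $v(t)=e^{-tA(0)}u_0$ and $w=u-v$, so $w(0)=0$ and $w'+A(t)w=f-(A(t)-A(0))e^{-tA(0)}u_0$; the leading term $A(0)e^{-\cdot A(0)}u_0$ lies in $L_p(0,\tau;\mathcal H)$ precisely when $u_0\in(\mathcal H,D(A(0)))_{1-\frac1p,p}$, by the classical trace characterization of this real-interpolation space, and the commutator forcing is controlled by \eqref{eq3-2} and the moment bounds, the condition \eqref{eq3-3} being exactly what places it in the class treated above. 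Feeding this into the zero-data result and adding back $v$ gives the solution and the stated a priori estimate.

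\emph{Main obstacle.} The delicate point is carrying out the perturbation step \emph{without} the Kato square-root identity $D(A(t)^{1/2})=\mathcal V$ used in earlier treatments to replace $\mathcal V$-norms by $\|A(t)^{1/2}\cdot\|$. One must instead route everything through the complex-interpolation spaces $\mathcal V_\beta$ and $(\mathcal V_\gamma)'$ and prove the scale moment bounds with constants uniform in $t,s$; getting the singularity $(t-s)^{-1-\gamma/2}$ to match the integrability of $\omega$, while simultaneously verifying the H\"ormander condition behind the $L_2\to L_p$ extrapolation of $R$, is where the real work lies.
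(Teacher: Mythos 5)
Your zero-data argument is essentially the route of \cite{HO15} and \cite{Ou15} (which the paper simply cites for that part), and your kernel factorization $A(t)e^{-(t-s)A(t)}\colon (\mathcal{V}_\gamma)'\to\mathcal{H}$ with norm $C(t-s)^{-1-\gamma/2}$, combined with $\mathcal{A}(t)-\mathcal{A}(s)\colon\mathcal{V}_\beta\to(\mathcal{V}_\gamma)'$ of norm $\omega(|t-s|)$ and $A(s)^{-1}\colon\mathcal{H}\to\mathcal{V}_\beta$ bounded, is exactly the Kato-free mechanism used there. Two caveats on that part: your patching ``in finitely many steps'' is circular as stated, because after the first subinterval you restart with the nonzero datum $u(t_1)$, whose membership in $(\mathcal{H},D(A(t_1)))_{1-\frac1p,p}$ is not free (the paper sidesteps this by replacing $A(t)$ with $A(t)+cI$ for $c$ large, which makes the Volterra norm $<1$ on all of $[0,\tau]$ at once); and the H\"ormander condition in the $t$-variable of your kernel requires a quantitative resolvent-difference estimate of the type $\|(z-A(t))^{-1}-(z-A(t'))^{-1}\|_{\mathcal{L(H)}}\le C\omega(|t-t'|)|z|^{-(2-\frac{\beta+\gamma}{2})}$, which you assert but do not derive; \cite{HO15} instead treats this operator as a pseudo-differential operator with operator-valued symbol.

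The genuine gap is in your treatment of nonzero initial data, which is precisely the part the paper actually proves. Setting $w=u-e^{-\cdot A(0)}u_0$ produces the forcing term $(\mathcal{A}(t)-\mathcal{A}(0))e^{-tA(0)}u_0$, and \eqref{eq3-2} only controls $\mathcal{A}(t)-\mathcal{A}(0)$ as an operator from $\mathcal{V}_\beta$ into $(\mathcal{V}_\gamma)'$; for $\gamma>0$ this term is therefore not $\mathcal{H}$-valued, so the zero-data maximal regularity result in $\mathcal{H}$ cannot absorb it, and brute force fails because the only available smoothing, $\|A(t)e^{-(t-s)A(t)}\|_{\mathcal{L}((\mathcal{V}_\gamma)',\mathcal{H})}\le C(t-s)^{-1-\gamma/2}$, is non-integrable. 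Moreover, even granting some repair, the natural bound $\|e^{-tA(0)}u_0\|_{\mathcal{V}_\beta}\lesssim t^{-\beta/2}\|u_0\|$ would only reproduce the stronger condition \eqref{eq3-5} of \cite{Ou15}, not \eqref{eq3-3}. The paper proceeds differently: it shows directly that $t\mapsto A(t)e^{-tA(t)}u_0\in L_p(0,\tau;\mathcal{H})$ by comparing with $A(0)e^{-tA(0)}u_0$ (which lies in $L_p$ exactly when $u_0$ is in the trace space) through a contour-integral representation of the difference, rewritten via the forms so that \eqref{eq3-2} applies. The decisive ingredient, absent from your proposal, is the interpolated resolvent estimate \eqref{eq3-9},
$\|(zI-A(0))^{-1}\|_{\mathcal{L}((\mathcal{H},D(A(0)))_{1-\frac1p,p},\,\mathcal{V}_\beta)}\le C|z|^{-(1-\frac{\beta}{2p})}$,
obtained by interpolating $\mathcal{L}(\mathcal{H},\mathcal{V}_\beta)$ and $\mathcal{L}(D(A(0)),\mathcal{V}_\beta)$ bounds: the gain $\frac{\beta}{2p}$ (rather than $\frac{\beta}{2}$) yields the pointwise estimate $C\,\omega(t)\,t^{-\frac12(\gamma+\frac{\beta}{p})}\|u_0\|_{(\mathcal{H},D(A(0)))_{1-\frac1p,p}}$, whose $p$-th power integrability is exactly \eqref{eq3-3}. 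Without a substitute for this step your argument cannot reach the weakened hypothesis \eqref{eq3-3}, which is the whole point of the theorem relative to \eqref{eq3-4} and \eqref{eq3-5}.
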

The first part of the theorem (i.e., the case $u_0 = 0$) was proved in \cite{HO15} when $\beta = \gamma = 1$ (and hence 
$[\mathcal{H}, \mathcal{V}]_\beta = [\mathcal{H}, \mathcal{V}]_\gamma = \mathcal{V}$).  The case with different 
values $\beta$ and $\gamma$ was proved  in \cite{Ou15}. See also  \cite{AM} for a related result. In order to treat the case of a non-trivial initial data 
$u_0 \in ( \mathcal{H}, D(A(0)))_{1-\frac{1}{p}, p}$, the assumption  required on $\omega$ in  \cite{HO15} is 
\begin{equation}\label{eq3-4}
 \int_0^\tau \left( \frac{\omega(t)}{t}\right)^p dt < \infty,
 \end{equation}
and in  \cite{Ou15},
\begin{equation}\label{eq3-5}
 \int_0^\tau \left( \frac{\omega(t)}{t^{\frac{\beta + \gamma}{2}}}\right)^p dt < \infty.
 \end{equation}
In the previous theorem we replace these conditions by the weaker condition \eqref{eq3-3}. The important example $\omega(t) = t^\alpha$ shows that 
\eqref{eq3-4} and \eqref{eq3-5}) require   a large  $\alpha$ (and hence  more regularity) in the case  $ p > 2$, whereas 
\eqref{eq3-3} does not require any additional regularity than $ \alpha > \frac{\gamma}{2}$ which  is already needed for  the first condition 
$$\int_0^{\tau} \frac{w(t)}{t^{1+\frac{\gamma}{2}}}dt <\infty.$$

\begin{proof} As explained above the  sole novelty here is the treatment of  the case $u_0 \in ( \mathcal{H}, D(A(0)))_{1-\frac{1}{p}, p}$ under the  condition \eqref{eq3-3}. 
Following  \cite{HO15} and \cite{Ou15}, we have to prove that 
\begin{equation}\label{eq3-6}
t \mapsto  A(t) e^{-tA(t)}u_0 \in L_p(0, \tau;  \mathcal{H}).
\end{equation}
Since we can assume without loss of generality that $A(0)$ is invertible, then  $u_0 \in ( \mathcal{H}, D(A(0)))_{1-\frac{1}{p}, p}$ is equivalent to (see \cite[Theorem~1.14]{Tri})
\begin{equation}\label{eq3-7}
t \mapsto    A(0) e^{-tA(0)}u_0 \in L_p(0, \tau; \mathcal{H}).
\end{equation}
For $g \in \mathcal{H}$  and a chosen contour  $\Gamma$ in the positive half-plane we write by  the holomorphic functional calculus
\begin{align*}
   & (A(t)  e^{-t A(t)} u_0 -  A(0)  e^{-t A(0)} u_0 , g)\\
 &=   \frac{1}{2\pi i} \int_\Gamma (z e^{-t z} \bigl[ (z I -A(t))^{-1} - (zI -  A(0))^{-1} \bigr] u_0, g) \, dz\\
 &=  \frac{1}{2\pi i}  \int_\Gamma  (z e^{-t z} \bigl[ \mathcal{A} (0)  - \mathcal{A} (t)\bigr] (z I -A(0))^{-1} u_0, (\overline{z} I -A(0)^*)^{-1} g) dz\\
 &=   \frac{1}{2\pi i}  \int_\Gamma  z e^{-t z} \bigl[\fra(0,  (z I -A(0))^{-1} u_0, (\overline{z} I -A(0)^*)^{-1} g) - \\
 & \hspace{5cm} \fra(t,  (z I -A(0))^{-1} u_0, {(z I -A(0))^{-1}}^* g) \bigr] \,dz.
\end{align*}
Hence by \eqref{eq3-2}, the modulus is bounded by
$$C \omega(t)  \int_0^\infty |z| e^{- c t |z|} \| (z I -A(0))^{-1} u_0\|_{\mathcal{V}_{\beta}}  \| {(z I -A(t))^{-1}}^*  g\|_{\mathcal{V}_{\gamma}}\, d|z|.$$
Note that by interpolation (see e.g. \cite{Ou15})
\begin{equation}\label{eq3-8}
\| (\overline{z}I - A(t)^*)^{-1} \|_{\mathcal{L(H, V_\gamma)}}  \le \frac{C}{| z |^{1 -\frac{\gamma}{2}}}.
\end{equation}
On the other hand for $f \in D(A(0))$, 
\begin{align*}
\delta \|(z I - A(0))^{-1} f \|^{2}_{\mathcal V} &\leq \Re (A(0) (z I - A(0))^{-1} f, (z I - A(0))^{-1}f)\\
&\le \| (z I - A(0))^{-1} A(0) f \| \| (z I - A(0))^{-1} f \| \\
&\le \frac{C}{|z|} \| A(0) f \|  \| (z I - A(0))^{-1} f \|_{\mathcal V}.
\end{align*}
The  embedding $\mathcal{V}  \hookrightarrow  \mathcal{ V}_\beta$ gives
$$ \| (z I - A(0))^{-1} \|_{\mathcal{L}(D(A(0)), \mathcal{V}_\beta)} \le \frac{C}{| z |}.$$
Hence, by \eqref{eq3-8} and interpolation
\begin{equation}\label{eq3-9}
\| (z I - A(0))^{-1} \|_{\mathcal{L}( (\mathcal{H}, D(A(0)))_{1-\frac{1}{p}, p}, \mathcal{V}_\beta)} \le \frac{C}{| z |^{1-\frac{\beta}{2p}}}.
\end{equation}
Using these estimates we obtain 
\begin{align*}
&\, |(A(t)  e^{-t A(t)} u_0 -  A(0)  e^{-t A(0)} u_0 , g) | \\
&\, \le C \omega(t) \int_0^\infty  \frac{e^{- c t |z|}}{ |z|^{1 - \frac{1}{2}(\gamma + \frac{\beta}{p})}}\, d |z| \|g\| 
\|u_0 \|_{( \mathcal{H}, D(A(0)))_{1-\frac{1}{p}, p}} \\
&\, \le C' \frac{\omega (t)}{ t^{\frac{1}{2}(\gamma + \frac{\beta}{p})}} \|g\| \|u_0 \|_{( \mathcal{H}, D(A(0)))_{1-\frac{1}{p}, p}}.
\end{align*}
Hence, $t \mapsto  A(t) e^{-tA(t)}u_0 \in L_p(0, \tau, \mathcal{H})$ for $u_0 \in ( \mathcal{H}, D(A(0)))_{1-\frac{1}{p}, p}$ if  $\omega(t)$ satisfies 
\eqref{eq3-3}.
\end{proof}

\section{Maximal regularity for left perturbations}\label{sec3}
This section is devoted to the main subject of this paper in which we  are interested in maximal regularity for operators 
$B(t)A(t)$ for a wide class of operators $B(t)$ and $A(t)$. We will consider in another section  the same problem for right multiplicative perturbations $A(t)B(t)$.

\subsection{Single left multiplicative pertubation-Resolvent estimates}\label{Sec2}
Let $\mathcal{H}$ and $\mathcal{V}$ be as above. We denote again by $\| \cdot \|$ and $\| \cdot \|_\mathcal{V}$ their associated norms, respectively. \\
Let $\fra: \mathcal{V} \times  \mathcal{V}  \rightarrow  \mathbb{C}$ be a closed, coercive and continuous sesquilinear form.  We denote by 
$A$ and $\mathcal{ A}$ its associated operators on $\mathcal{H}$ and $ {\mathcal{V}}'$, respectively.\\
Let  $\frb: \mathcal{ H}\times \mathcal{ H} \rightarrow \mathbb{C}$ be a bounded sesquilinear form. We assume that $\frb$ is coercive, that is 
there exists a constant $\delta > 0$  such that
\begin{equation}\label{eq2-1}
\Re  \frb(u,u)\geq \delta \| u \|^{2}, \  u \in  \mathcal{H}.  
\end{equation}
There exists a unique bounded operator associated with $\frb$. We  denote temporarily this operator by  $\mathcal{C}$.
 Note that by coercivity, it is obvious that $\mathcal{C}$ is invertible on $\mathcal{H}$.

 Now we introduce another operator $A_\frb$ which we call {\it the operator associated  with $\fra$ with respect to $\frb$}. It is defined as follows  
$$ D(A_{\frb})= \{u\in \mathcal{ V},\exists v\ \mathcal{ H}: \fra(u,\phi)= \frb(v,\phi) \ \forall \phi \in \mathcal{ V} \}, \ A_{\frb}u:= v.$$ 
The difference with $A$ is that we take the form $\frb$ instead of the scalar product of $\mathcal{H}$  in the equality $\fra(u,\phi)= \frb(v,\phi).$
The operator $A_{\frb}$ is well defined. Indeed, if 
$\frb(v_{1},\phi) = \frb(v_{2},\phi)$ for all $\phi \in \mathcal{ V}$ then by density this equality holds for all $\phi \in \mathcal{H}$. Therefore, 
taking  $\phi=v_{2}-v_{1} $ and using  \eqref{eq2-1},  we obtain  $v_{2}=v_{1}$. 
\begin{prop}
Let $B :=\mathcal{C}^{-1}$. Then $A_{\frb}=BA$ with domain $D(A_\frb) = D(A)$.  
\end{prop}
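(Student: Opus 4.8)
The plan is to make the definition of $A_{\frb}$ completely explicit in terms of the bounded operator $\mathcal{C}$ attached to $\frb$, and then to read off both the domain identity and the factorisation directly. First I would recall that $\mathcal{C}$ is characterised by $\frb(u,\phi)=(\mathcal{C}u,\phi)$ for all $u,\phi\in\mathcal{H}$, that it is bounded because $\frb$ is bounded, and that coercivity \eqref{eq2-1} forces $\mathcal{C}$ to be invertible on $\mathcal{H}$: from $\Re(\mathcal{C}u,u)\ge\delta\|u\|^2$ one gets $\|\mathcal{C}u\|\ge\delta\|u\|$ (bounded below, hence injective with closed range), and the same estimate for $\mathcal{C}^*$ gives dense range, so $\mathcal{C}$ is onto. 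Thus $B=\mathcal{C}^{-1}\in\mathcal{L}(\mathcal{H})$ is well defined.

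Next I would rewrite the defining relation of $A_{\frb}$ using $\mathcal{C}$. Since $\phi\in\mathcal{V}\subset\mathcal{H}$, we have $\frb(v,\phi)=(\mathcal{C}v,\phi)$, so $u\in D(A_{\frb})$ precisely when $u\in\mathcal{V}$ and there exists $v\in\mathcal{H}$ with $\fra(u,\phi)=(\mathcal{C}v,\phi)$ for all $\phi\in\mathcal{V}$. This is exactly the assertion that $u\in D(A)$ with $Au=\mathcal{C}v$; moreover $D(A)\subset\mathcal{V}$ automatically (the operator domain is contained in the form domain), so the side condition $u\in\mathcal{V}$ in the definition of $A_{\frb}$ is not an additional constraint.

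The two inclusions then follow. For $D(A_{\frb})\subseteq D(A)$: given $u\in D(A_{\frb})$ with associated $v$, set $h:=\mathcal{C}v\in\mathcal{H}$; then $\fra(u,\phi)=(h,\phi)$ for all $\phi\in\mathcal{V}$, whence $u\in D(A)$ and $Au=\mathcal{C}v=\mathcal{C}A_{\frb}u$, and applying $B=\mathcal{C}^{-1}$ yields $A_{\frb}u=BAu$. Conversely, for $u\in D(A)$ I would set $v:=BAu\in\mathcal{H}$ and check $\frb(v,\phi)=(\mathcal{C}v,\phi)=(Au,\phi)=\fra(u,\phi)$ for all $\phi\in\mathcal{V}$, so that $u\in D(A_{\frb})$ with $A_{\frb}u=v=BAu$. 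Combining the two directions gives $D(A_{\frb})=D(A)$ and $A_{\frb}=BA$.

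There is no genuine analytic difficulty here; the entire content is the bookkeeping translation between the form $\frb$ and its operator $\mathcal{C}$. The one place where invertibility (hence coercivity of $\frb$) is really used is surjectivity: it guarantees that as $v$ ranges over $\mathcal{H}$ the element $\mathcal{C}v$ ranges over all of $\mathcal{H}$, which is what matches the unconstrained $h$ in the definition of $D(A)$. The only mildly delicate point worth stating carefully is the inclusion $D(A)\subset\mathcal{V}$, which renders the requirement $u\in\mathcal{V}$ in the definition of $A_{\frb}$ redundant.
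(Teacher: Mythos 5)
Your proof is correct and follows essentially the same route as the paper: both directions are established by translating $\frb(v,\phi)=(\mathcal{C}v,\phi)$ back and forth between the defining relation of $A_{\frb}$ and that of $A$, exactly as in the paper's argument. Your additional remarks (spelling out the invertibility of $\mathcal{C}$ via the bound $\|\mathcal{C}u\|\ge\delta\|u\|$ and its adjoint, and noting $D(A)\subset\mathcal{V}$) are sound elaborations of points the paper treats as immediate, not a different method.
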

\begin{proof}
Let $ u \in D(A_\frb)$ and $v= A_\frb u$. Then 
$$\fra(u,\phi)= \frb(v,\phi)= (\mathcal{C} v,\phi) \ \forall \phi \in \mathcal{V}.$$
Thus,  $u\in D(A) $ and $Au=\mathcal{C} v = B^{-1}v$. This gives, $u\in D(A) $  and $A_\frb u = v = BA u$.\\
For the converse, we write for $u \in D(A)$ 	and $\phi \in \mathcal{V}$
$$\fra(u,\phi) = (Au, \phi) = (\mathcal{C} BAu, \phi) = \frb( BAu, \phi).$$
This gives $u \in D(A_\frb)$ and $BA u = A_\frb u$. 
\end{proof}

It is obvious that $BA$ is a closed operator on $\mathcal{H}$.
In order to continue we assume that $\fra$ is coercive (i.e., it satisfies [H3] with $\nu = 0$) and define $w_{0}$ and $w_{1}$ to be the angles of the numerical ranges of $A$ and $B$, respectively. That is 
$$  (Au,u)\in \overline{\Sigma (w_{0})}  := \{z\in \mathbb{C}^{*} ,| \arg (z)|\leq w_{0}\}  $$
and
$$\frb(u,u)=(B^{-1}u,u) \in \overline{\Sigma (w_{1})}$$
where $w_{0}$ and $ w_{1}$ are the smallest possible values for which  these two properties 
hold for all $u\in \mathcal{V}$. 
Note that $w_{0}, w_{1} \in[0,\frac{\pi}{2})$ because of the coercivity property.  

\begin{prop} \label{prop2-1}
For all  $\lambda \notin \overline{\Sigma(w_{0}+w_{1})}$,  the operator  $\lambda I-BA$ is invertible on $\mathcal{H}$ and 
$$\|(\lambda I-BA)^{-1} \|_{\mathcal{L(H)}} \leq \frac{\delta^{-1}  \| B^{-1} \|_{\mathcal{L(H)}}  }{dist(\lambda,\overline{\Sigma(w_{0}+w_{1})}) }.$$
\end{prop}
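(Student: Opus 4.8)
The plan is to prove the resolvent estimate first (which simultaneously yields injectivity and closed range) and then upgrade this to invertibility by establishing surjectivity. Throughout I use that $\frb(u,v)=(\mathcal{C}u,v)=(B^{-1}u,v)$, so testing the equation in the pairing induced by $\frb$ is the natural thing to do. For $u\in D(BA)=D(A)$, I would compute
\[
(B^{-1}(\lambda I - BA)u,\,u)=\lambda (B^{-1}u,u)-(B^{-1}BAu,u)=\lambda b-a,
\]
where $a:=(Au,u)=\fra(u,u)\in\overline{\Sigma(w_{0})}$ and $b:=(B^{-1}u,u)=\frb(u,u)\in\overline{\Sigma(w_{1})}$. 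The point of working with $\frb$ rather than the scalar product is precisely that it turns $BA$ back into $A$ on the left factor.

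The core of the argument is then a short geometric estimate. For $u\neq 0$, coercivity gives $\Re a\geq\delta\|u\|_{\mathcal V}^{2}>0$ and $\Re b\geq\delta\|u\|^{2}>0$, so $a,b\neq 0$ and, since $|\arg(a/b)|\leq|\arg a|+|\arg b|\leq w_{0}+w_{1}$, we have $a/b\in\overline{\Sigma(w_{0}+w_{1})}$. Writing $\lambda b-a=b(\lambda-a/b)$ and using that $\lambda\notin\overline{\Sigma(w_{0}+w_{1})}$ while $a/b\in\overline{\Sigma(w_{0}+w_{1})}$, I get
\[
|\lambda b-a|=|b|\,|\lambda-a/b|\geq \Re b\cdot\mathrm{dist}(\lambda,\overline{\Sigma(w_{0}+w_{1})})\geq \delta\|u\|^{2}\,\mathrm{dist}(\lambda,\overline{\Sigma(w_{0}+w_{1})}).
\]
Since the left-hand side equals $|(B^{-1}(\lambda I-BA)u,u)|\leq\|B^{-1}\|_{\mathcal{L(H)}}\|(\lambda I-BA)u\|\,\|u\|$, dividing by $\|u\|$ yields
\[
\|(\lambda I-BA)u\|\geq \frac{\delta\,\mathrm{dist}(\lambda,\overline{\Sigma(w_{0}+w_{1})})}{\|B^{-1}\|_{\mathcal{L(H)}}}\,\|u\|,
\]
which is exactly the reciprocal of the asserted bound. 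This shows $\lambda I-BA$ is injective, and combined with the closedness of $BA$ it forces the range to be closed.

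The remaining step, surjectivity, is the one I expect to be the main obstacle, since the a priori estimate alone only controls the $\mathcal{H}$-norm. I would obtain it by the method of continuity, interpolating $\frb$ with the scalar product: set $\frb_{s}:=(1-s)(\cdot,\cdot)+s\frb$ for $s\in[0,1]$, let $\mathcal{C}_{s}$ be its associated operator and $B_{s}:=\mathcal{C}_{s}^{-1}$. Each $\frb_{s}$ is bounded by $\max(1,M)$ and coercive with constant $\min(1,\delta)$, and its numerical range still lies in $\overline{\Sigma(w_{1})}$ because that sector is convex and contains the positive reals; hence the estimate above holds for every $B_{s}A$ with constants uniform in $s$. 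Consider $S:=\{s\in[0,1]:\lambda I-B_{s}A\ \text{is surjective}\}$. It contains $0$, since $B_{0}=I$ and $\lambda\notin\overline{\Sigma(w_{0})}$ places $\lambda$ in the resolvent set of the sectorial operator $A$. Using the factorization
\[
\lambda I-B_{s'}A=\bigl[I-(B_{s'}-B_{s})A(\lambda I-B_{s}A)^{-1}\bigr](\lambda I-B_{s}A),
\]
together with the uniform bound $\|A(\lambda I-B_{s}A)^{-1}\|_{\mathcal{L(H)}}\leq C$ (which follows from the a priori estimate and the identity $Au=\mathcal{C}_{s}B_{s}Au$) and the continuity of $s\mapsto B_{s}$, one checks that $S$ is both open and closed in $[0,1]$. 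By connectedness $S=[0,1]$, so $1\in S$ and $\lambda I-BA$ is surjective, which completes the proof.
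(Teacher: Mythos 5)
Your proof is correct, and its first half is essentially the paper's own argument in a slightly reorganized form: pairing $(\lambda I-BA)u$ against $u$ through $B^{-1}$ produces exactly the quantity the paper estimates via $\|(\lambda I-BA)u\|\,\|u\|\ge \|B^{-1}\|^{-1}_{\mathcal{L(H)}}|(\lambda B^{-1}u-Au,u)|=\|B^{-1}\|^{-1}_{\mathcal{L(H)}}|b|\,|\lambda-a/b|$, and both arguments then use $a/b=\fra(u,u)/\frb(u,u)\in\overline{\Sigma(w_{0}+w_{1})}$ together with $|b|\ge\Re b\ge\delta\|u\|^{2}$ to get the same lower bound, hence injectivity and closed range. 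Where you genuinely diverge is surjectivity. The paper disposes of it by duality: the adjoint is $\overline{\lambda}I-A^{*}B^{*}=(\overline{\lambda}(B^{*})^{-1}-A^{*})B^{*}$, and since $A^{*}$ and $(B^{*})^{-1}$ satisfy the same numerical-range hypotheses with the same angles, the injectivity estimate applies verbatim to the first factor; injectivity of the adjoint plus closed range of $\lambda I-BA$ gives invertibility in a few lines. You instead run a method of continuity along $\frb_{s}=(1-s)(\cdot,\cdot)+s\frb$, and your uniformity checks do hold: $\overline{\Sigma(w_{1})}\cup\{0\}$ is a convex cone containing $[0,\infty)$, so the numerical ranges of $\frb_{s}$ stay in the sector; $\|\mathcal{C}_{s}\|_{\mathcal{L(H)}}\le\max(1,\|\mathcal{C}\|_{\mathcal{L(H)}})$ and the coercivity constant $\min(1,\delta)$ give $s$-uniform constants in the a priori estimate; and $\|A(\lambda I-B_{s}A)^{-1}\|_{\mathcal{L(H)}}\le\|\mathcal{C}_{s}\|_{\mathcal{L(H)}}\bigl(1+|\lambda|\,\|(\lambda I-B_{s}A)^{-1}\|_{\mathcal{L(H)}}\bigr)$ is indeed bounded uniformly in $s$ for fixed $\lambda$, exactly as you indicate. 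In fact your Neumann-series factorization makes $S$ open with a step size independent of $s$, which already yields $S=[0,1]$ by finitely many steps, so the closedness half of your connectedness argument is not even needed. The trade-off is clear: the paper's duality route is shorter but requires recognizing that the hypotheses are stable under adjoints and that $(BA)^{*}=A^{*}B^{*}$ (which uses boundedness of $B$); your continuity route is longer and needs the uniform estimates, but it never touches adjoints and is the more robust template when the adjoint problem is not of the same type. One cosmetic slip: the coercivity constant of $\fra$ need not be the same $\delta$ as that of $\frb$, but this is harmless since only $\Re b\ge\delta\|u\|^{2}$ enters your final bound.
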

  
\begin{proof}
Let $u\in D(A)$. We write 
\begin{align*}
 \|(\lambda I-BA) u \| \|u\| &=\|B(\lambda B^{-1} -A) u  \| \|u\|  \\
&\geq \frac{1}{\|B^{-1}\|_{\mathcal{L(H)}}}\|(\lambda B^{-1} I-A)u \|\|u\| \\
&\geq \frac{1}{\|B^{-1}\|_{\mathcal{L(H)}}}|(\lambda B^{-1} u-Au,u)|  \\
&=\frac{|(B^{-1}u,u)|}{\|B^{-1}\|_{\mathcal{L(H)}}}|\lambda-\frac{(Au,u)}{(B^{-1}u,u)} |.
\end{align*}
Since $ \frac{(Au,u)}{(B^{-1}u,u)} = \frac{\fra(u,u)}{\frb(u,u)} \in \overline{\Sigma(w_{0}+w_{1})} $ it follows that  
$$ |(\lambda-\frac{(Au,u)}{(B^{-1}u,u)} | \geq dist(\lambda,\overline{\Sigma(w_{0}+w_{1})}).$$
On the other hand, by \eqref{eq2-1}, $|(B^{-1}u,u)|\geq \delta   \|u\|^{2}$ and so 
$$\|(\lambda I-BA)u \| \|u\|\geq \frac{\delta}{\|B^{-1}\|_{\mathcal{L(H)}}} \|u\|^{2} dist(\lambda,\overline{\Sigma(w_{0}+w_{1})}).$$
Hence, 
\begin{equation}\label{eq2-2}
 \|(\lambda I-BA) u \| \geq  \frac{\delta}{\|B^{-1}\|_{\mathcal{L(H)}} } \|u\| dist(\lambda,\overline{\Sigma(w_{0}+w_{1})})\ \ \forall u\in D(A). 
 \end{equation}
This implies  that $\lambda I-BA$ is injective and has closed range for $ \lambda \not\in \overline{\Sigma(w_{0}+w_{1})}$.\\
In order to prove that $ \lambda I-BA$ is invertible it remains  to prove that it has dense range.
By duality, one has to  prove that the adjoint is injective.The adjoint operator is $\overline{\lambda}I-A^{*}B^{*}$.
We write 
$$\overline{\lambda}I-A^{*}B^{*}= (\overline{\lambda} {B^{*}}^{-1}-A^{*})B^{*}.$$
The previous  arguments show that $\lambda B^{-1}-A$ is injective.
This also applies to $\lambda {B^{*}}^{-1}-A^{*} $. 
Since $B^*$ is invertible, we obtain $\overline{\lambda}I-A^{*}B^{*}$ is injective and hence $\lambda I-BA $ is invertible .
Now \eqref{eq2-2}  gives 
$$ \|(\lambda I-BA)^{-1} \| \leq \frac{\|B^{-1}\|_{\mathcal{L(H)}}}{\delta.dist(\lambda,\overline{\Sigma(w_{0}+w_{1})})}$$
for all $\lambda \not\in \overline{\Sigma(w_{0}+w_{1})}).$
\end{proof}  

\begin{cor}\label{cor2-2}
Suppose that $w_0 +w_1 < \frac{\pi}{2}$.Then $-BA$ is the generator of a bounded holomorphic semigroup on $\mathcal{H}$.
\end{cor}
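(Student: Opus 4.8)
The plan is to read Proposition~\ref{prop2-1} as the assertion that $BA$ is sectorial of angle $w_0+w_1$, and then to invoke the classical characterization of generators of bounded holomorphic semigroups in terms of sectoriality.

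First I would record that $BA$ is densely defined: by the Proposition preceding~\ref{prop2-1} one has $D(BA)=D(A)$, and $D(A)$ is dense in $\mathcal{H}$ because $-A$ is itself the generator of a holomorphic semigroup on $\mathcal{H}$ (it is $m$-sectorial, being the operator associated with the coercive continuous form $\fra$).

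The key step is to upgrade the resolvent bound of Proposition~\ref{prop2-1} into the usual sectorial estimate $\|\lambda(\lambda I-BA)^{-1}\|_{\mathcal{L(H)}}\le C_\theta$ valid on the complement of any slightly larger sector. Fix $\theta$ with $w_0+w_1<\theta<\frac{\pi}{2}$; this is possible precisely because $w_0+w_1<\frac{\pi}{2}$. For $\lambda=|\lambda|e^{i\phi}$ with $|\phi|\ge\theta$, an elementary planar computation gives
$$ dist(\lambda,\overline{\Sigma(w_0+w_1)})\ge |\lambda|\,\sin(\theta-(w_0+w_1)). $$
Inserting this into the estimate of Proposition~\ref{prop2-1} yields
$$ \|\lambda(\lambda I-BA)^{-1}\|_{\mathcal{L(H)}}\le \frac{\delta^{-1}\|B^{-1}\|_{\mathcal{L(H)}}}{\sin(\theta-(w_0+w_1))}=:C_\theta<\infty $$
for every $\lambda\notin\overline{\Sigma(\theta)}$. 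Since $\theta$ may be chosen arbitrarily close to $w_0+w_1$, this shows that $BA$ is a sectorial operator of angle $w_0+w_1$.

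Finally, because $w_0+w_1<\frac{\pi}{2}$, the classical generation theorem for holomorphic semigroups (see e.g. \cite{Ouh05}) applies and gives that $-BA$ generates a bounded holomorphic semigroup on $\mathcal{H}$, of angle $\frac{\pi}{2}-(w_0+w_1)$. The only genuine obstacle is the geometric distance estimate above; everything else is the dense-domain remark together with a direct citation of the sectoriality/generation correspondence.
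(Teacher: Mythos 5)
Your proposal is correct and follows essentially the same route as the paper: both convert the resolvent estimate of Proposition~\ref{prop2-1} into the standard sectorial bound $\|\lambda(\lambda I-BA)^{-1}\|_{\mathcal{L(H)}}\le C_\theta$ outside a sector $\overline{\Sigma(\theta)}$ with $w_0+w_1<\theta<\frac{\pi}{2}$ and then cite the classical generation theorem for bounded holomorphic semigroups of angle $\frac{\pi}{2}-(w_0+w_1)$. You are in fact slightly more careful than the paper, since you make explicit the geometric estimate $dist(\lambda,\overline{\Sigma(w_0+w_1)})\ge |\lambda|\sin(\theta-(w_0+w_1))$ that the paper uses implicitly, and you also record the density of $D(BA)=D(A)$, which the paper leaves unsaid.
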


\begin{proof}
By Proposition \ref{prop2-1},
$$\|(\lambda I-BA)^{-1}\| \leq \frac{c}{|\lambda|}, \ \ \forall  \lambda \not\in \overline{\Sigma(w_{0}+w_{1})}) $$
In other words, $\lambda I+BA $ is invertible for $\lambda \in \Sigma (\pi-(w_0+w_1)) $ and :
$$\|(\lambda I+BA)^{-1}\| \leq \frac{c}{|\lambda|}, \ \ \forall  \lambda \in {\Sigma(\pi-(w_{0}+w_{1})}). $$
It is a classical fact that the latter estimate implies that $-BA$ generates a bounded holomorphic semigroup of angle $\frac{\pi}{2}-(w_0+w_1)$.
\end{proof}

Obviously,  one cannot remove  the assumption $w_0 +w_1 < \frac{\pi}{2}$ in the previous result. Indeed, let $A = - e^{i \frac{\pi}{3}} \Delta$ on $L_2(\R^d)$ and $B$  be the multiplication by $e^{i \frac{\pi}{3}}$. Then  $-BA = e^{i \frac{2\pi}{3}} \Delta$ is not a generator of a $C_0$-semigroup. 


\subsection{Single pertubation-Maximal regularity }\label{subsec3-1}
Let $(\fra(t))_{t\in [0,\tau]}, A(t), \mathcal{A}(t)$ and $\frb$ be as in the previous sub-section. We assume that [H3] holds with $\nu = 0$. 
In particular, \eqref{eq3-1} holds.  We also have 
\begin{equation}\label{eq3-2-1}
\frb(u,u)\in \overline{\Sigma(w_1)}
\end{equation}
 for some $w_1\in [0,\frac{\pi}{2})$ by coercivity of $\frb$. \\
We make the assumption $w_0 +w_1 <\frac{\pi}{2}$.  By Corollary \ref{cor2-2}, for each $t \in [0, \tau]$, the operator 
$-BA(t)$ generates a holomorphic semigroup $(e^{-s BA(t)})_{s\ge0}$ on $\mathcal{H}$. 

Our aim in this section is to prove maximal regularity in $\mathcal{H}$ for the Cauchy problem associated with 
$BA(t)$, $t\in [0,\tau]$.  The definition of maximal $L_p$-regularity in this context is the same as in Definition \ref{df1}.\\
Set
$$R(\lambda, BA(t)) := (\lambda I + BA(t))^{-1}$$
for $\lambda \in \rho(-BA(t))$. 

\begin{prop}\label{prop3-1}
Assume that $w_0 +w_1 <\frac{\pi}{2}$. Then
\begin{enumerate}
\item[1-]$\|(\lambda B^{-1}+A(t))^{-1}\|_{\mathcal{L( H)}}\leq \frac{C}{|\lambda|+1}, \  \lambda \in {\Sigma(\pi - (w_{0}+w_{1})})$, 
\item[2-]$ \|R(\lambda,BA(t))B\|_{\mathcal{L(V', H)}}\leq \frac{C}{(|\lambda|+1)^{\frac{1}{2}}}, \  \lambda \in {\Sigma(\pi - (w_{0}+w_{1})})$,                                        
\item[3-]$\|e^{-(t-s)BA(t)}B\|_{\mathcal{L(V', H)}}\leq \frac{C}{(t-s)^{\frac{1}{2}}}$,
\item[4-]$\|e^{-(t-s)BA(t)}B\|_{\mathcal{L(V', V)}}\leq \frac{C}{(t-s)}$.
\end{enumerate}
The constant $C$ is independent of $t$ and $\lambda$. 
\end{prop}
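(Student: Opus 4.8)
The plan is to treat the resolvent bounds (1) and (2) together by reducing each to a single scalar estimate, and then to obtain the semigroup bounds (3) and (4) from (2) by a Dunford--Cauchy contour integral. First, for fixed $t$ and $\lambda$ in (a closed subsector of) $\Sigma(\pi-(w_0+w_1))$, I would set $u:=R(\lambda,BA(t))Bg=(\lambda B^{-1}+\mathcal{A}(t))^{-1}g$, using the algebraic identity $\lambda B^{-1}+A(t)=B^{-1}(\lambda I+BA(t))$, which gives $R(\lambda,BA(t))B=(\lambda B^{-1}+A(t))^{-1}$; thus both (1) and (2) concern the single operator $(\lambda B^{-1}+\mathcal{A}(t))^{-1}$, measured from $\mathcal{H}$ respectively from $\mathcal{V}'$. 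Pairing the identity $\lambda B^{-1}u+\mathcal{A}(t)u=g$ with $u\in\mathcal{V}$ yields the scalar relation $\frb(u,u)\,(\lambda+q)=\langle g,u\rangle$, where $q:=\fra(t,u,u)/\frb(u,u)$. By the numerical-range hypotheses, $q\in\overline{\Sigma(w_0+w_1)}$, and this is the decisive point: $q$ lives in the \emph{narrow} sector of half-angle $w_0+w_1<\tfrac{\pi}{2}$.

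The key elementary lemma I would isolate is that, on any closed subsector of $\Sigma(\pi-(w_0+w_1))$, one has $|\lambda+q|\ge c(|\lambda|+|q|)$ for all $q\in\overline{\Sigma(w_0+w_1)}$, with $c>0$ depending only on the subsector; geometrically, $-q$ has argument of modulus $\ge\pi-(w_0+w_1)$, hence cannot align with $\lambda$ away from the boundary rays. Combining this with coercivity then gives everything. Indeed, from $|\frb(u,u)|=|\langle g,u\rangle|/|\lambda+q|$ and $\Re\frb(u,u)\ge\delta\|u\|^2$ I get $\delta\|u\|^2\le \|g\|_{\mathcal{V}'}\|u\|_{\mathcal{V}}/(c(|\lambda|+|q|))$, while $\Re\fra(t,u,u)\ge\delta\|u\|_{\mathcal{V}}^2$ together with $|\fra(t,u,u)|=|q|\,|\frb(u,u)|$ and $|q|/(|\lambda|+|q|)\le 1$ gives $\|u\|_{\mathcal{V}}\le C\|g\|_{\mathcal{V}'}$. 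Feeding the latter into the former produces $\|u\|\le C(1+|\lambda|)^{-1/2}\|g\|_{\mathcal{V}'}$, which is (2); for (1) I take $g\in\mathcal{H}$, use $|\langle g,u\rangle|\le\|g\|\,\|u\|$, and note that $|q|$ is bounded below by a positive constant (since $\Re\fra(t,u,u)\ge\delta\|u\|_{\mathcal{V}}^2\gtrsim\delta\|u\|^2$ while $|\frb(u,u)|\le\|B^{-1}\|\,\|u\|^2$), so that $|\lambda|+|q|\gtrsim 1+|\lambda|$ and $\|u\|\le C(1+|\lambda|)^{-1}\|g\|$.

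For (3) and (4) I would represent the semigroup by $e^{-(t-s)BA(t)}B=\frac{1}{2\pi i}\int_\Gamma e^{-(t-s)\lambda}R(\lambda,BA(t))B\,d\lambda$, where $\Gamma$ consists of the two rays $\arg\lambda=\pm\psi$ with $\psi\in(\pi/2,\pi-(w_0+w_1))$ fixed, so that $\Re\lambda=|\lambda|\cos\psi<0$ on $\Gamma$ and $e^{-(t-s)\lambda}$ decays. Inserting the bound from (2) and writing $\lambda=\rho e^{\pm i\psi}$ reduces (3) to the scalar integral $\int_0^\infty (1+\rho)^{-1/2}e^{-c(t-s)\rho}\,d\rho\le C(t-s)^{-1/2}$, and inserting the $\mathcal{V}$-estimate $\|R(\lambda,BA(t))B\|_{\mathcal{L(V',V)}}\le C$ (established en route to (2)) reduces (4) to $\int_0^\infty e^{-c(t-s)\rho}\,d\rho=C(t-s)^{-1}$. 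Uniformity in $t$ is automatic, since every constant above depends only on the data $M,\delta,\|B^{-1}\|,w_0,w_1$ and on $\psi$.

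The main obstacle is exactly the half-power bound (2) on a sector of half-angle larger than $\pi/2$: a direct coercivity argument fails because the rotation needed to make $\Re[\lambda\,\frb(u,u)]\ge 0$ destroys the coercivity of $\fra$. Passing to the scalar quotient $q$, which confines the difficult ratio to the narrow sector $\overline{\Sigma(w_0+w_1)}$, is what makes the gap inequality $|\lambda+q|\ge c(|\lambda|+|q|)$ available and is the one genuinely delicate point; some care is also needed because this inequality degenerates on the boundary rays of $\Sigma(\pi-(w_0+w_1))$, so the constants are uniform only on subsectors --- which is harmless, since the contour $\Gamma$ used for (3)--(4) lies in such a subsector.
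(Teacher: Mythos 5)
Your proof is correct, but it takes a genuinely different route from the paper for assertions 1, 2 and 4. The paper deduces assertion 1 from its Proposition 3.2 via the same identity $(\lambda B^{-1}+A(t))^{-1}=R(\lambda,BA(t))B$ that you use, and then gets assertion 2 by the resolvent perturbation identity $(\lambda B^{-1}+A(t))^{-1}=(\lambda+A(t))^{-1}+(\lambda B^{-1}+A(t))^{-1}\lambda(I-B^{-1})(\lambda+A(t))^{-1}$, importing the known unperturbed bound $\|(\lambda+A(t))^{-1}\|_{\mathcal{L(V',H)}}\le C(1+|\lambda|)^{-1/2}$ from \cite{HO15}; for assertion 4 it splits $e^{-(t-s)BA(t)}B$ into two half-time factors and estimates the $\mathcal{L(H,V)}$ piece with a second perturbation identity. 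You instead prove everything from scratch by the quadratic-form computation $\frb(u,u)(\lambda+q)=\langle g,u\rangle$ with $q=\fra(t,u,u)/\frb(u,u)\in\overline{\Sigma(w_0+w_1)}$ and the sector-gap inequality $|\lambda+q|\ge c(|\lambda|+|q|)$, together with the lower bound $|q|\ge\delta/(\|B^{-1}\|_{\mathcal{L(H)}}C_{\mathcal{H}}^2)$ coming from coercivity of $\fra(t)$ and the embedding $\mathcal{V}\hookrightarrow\mathcal{H}$; this is essentially the same geometry that underlies the paper's Proposition 3.2, pushed further so as to yield the half-power bound directly. What your route buys: it is self-contained (no appeal to the unperturbed estimate of \cite{HO15}), and it produces \emph{en passant} the uniform bound $\|R(\lambda,BA(t))B\|_{\mathcal{L(V',V)}}\le C$, which turns assertion 4 into a single contour estimate $\int_0^\infty e^{-c(t-s)\rho}\,d\rho\le C(t-s)^{-1}$ instead of the paper's two-factor splitting --- arguably cleaner. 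What the paper's route buys is economy: it recycles already established resolvent bounds rather than redoing the coercivity geometry. Two small repairs are needed in your write-up, neither affecting the substance: (i) you define $u:=(\lambda B^{-1}+\mathcal{A}(t))^{-1}g$ without addressing invertibility; you should quote Proposition 3.2 (which gives invertibility of $\lambda+BA(t)$ on $\mathcal{H}$, via injectivity of the adjoint) and then extend your bounds from $g\in\mathcal{H}$ to $g\in\mathcal{V}'$ by density, as the a priori estimates are uniform; (ii) in the Dunford integral there is a sign slip: with the paper's convention $R(\lambda,BA(t))=(\lambda I+BA(t))^{-1}$ and your contour $\arg\lambda=\pm\psi$, $\Re\lambda<0$, the decaying factor is $e^{(t-s)\lambda}$, not $e^{-(t-s)\lambda}$ (equivalently, substitute $\mu=-\lambda$ and integrate $e^{-(t-s)\mu}(\mu-BA(t))^{-1}$ over rays $\arg\mu=\pm\theta$, $\theta\in(w_0+w_1,\tfrac{\pi}{2})$, as the paper does); the scalar integrals you reduce to are then exactly right. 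Your observation that the constants degenerate on the boundary rays of $\Sigma(\pi-(w_0+w_1))$ and are uniform only on closed subsectors is accurate, and indeed slightly more careful than the paper's statement.
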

\begin{proof}
We have  $(\lambda B^{-1}+A(t))^{-1}=(\lambda  +BA(t))^{-1}B$, then we  obtain assertion 1- from Proposition \ref{prop2-1}.  \\
Note that 
\begin{equation}\label{eq3-2-2}
(\lambda B^{-1}+A(t))^{-1}= (\lambda +A(t))^{-1}+(\lambda B^{-1}+A(t))^{-1}(\lambda(-B^{-1}+I))(\lambda+A(t))^{-1}.
\end{equation}
 Then
 \begin{align*}
     \|R(\lambda,BA(t)) B\|_{\mathcal{L(V', H)}} &= \|(\lambda B^{-1}+A(t))^{-1}\|_{\mathcal{L(V', H)}}\\
   & \le  \|(\lambda I +A(t))^{-1}\|_{\mathcal{L(V', H)}} +  \\
   & \hspace{-2cm}  \|(\lambda B^{-1}+A(t))^{-1}(\lambda(-B^{-1}+I))\|_{\mathcal{L(H)}}\|(\lambda+A(t))^{-1}\|_{\mathcal{L(V', H)}}.
 \end{align*}
Since    $$ \|(\lambda I +A(t))^{-1}\|_{\mathcal{L(V', H)}}  \le \frac{C}{(|\lambda|+1)^{\frac{1}{2}}}$$
(see e.g. \cite{HO15}), we obtain                                
 $$ \|R(\lambda,BA(t))B\|_{\mathcal{L(V', H)}} \le \frac{C}{(|\lambda|+1)^{\frac{1}{2}}},$$   
which proves  assertion 2. \\                                     
 Now we  choose an appropriate contour $\Gamma = \partial\Sigma(\theta)$ with $\theta <\frac{\pi}{2}$ and  write by the  functional calculus 
   $$e^{-(t-s)BA(t)}B= \frac{1}{2\pi i} \int_\Gamma e^{-(t-s)\lambda}(\lambda  - BA(t))^{-1}B d\lambda. $$ 
Then
\begin{align*}
\|e^{-(t-s)BA(t)}B\|_{\mathcal{L(V', H)}} &\leq  \frac{1}{2\pi} \int_0^{\infty} e^{-(t-s)\Re{\lambda}}\| (\lambda  - BA(t))^{-1}B\|_{\mathcal{L(V', H)}}d|\lambda| \\
&\leq  C \int_0^{\infty} e^{-(t-s)\Re{\lambda}} \frac{1}{{(|\lambda|+1)}^\frac{1}{2}}d|\lambda| \\
&\leq \frac{C'}{(t-s)^{\frac{1}{2}}}.
\end{align*}  
In order to prove assertion 4- we write
$$\|e^{-(t-s)BA(t)}B\|_{\mathcal{L(V', V)}} \le \|e^{-\frac{(t-s)}{2}BA(t)}B B^{-1}\|_{\mathcal{L(H, V)}}  \|e^{-\frac{(t-s)}{2}BA(t)}B \|_{\mathcal{L(V', H)}}$$
and 
$$\|e^{-\frac{(t-s)}{2}BA(t)}B B^{-1}\|_{\mathcal{L(H, V)}} \le  \| B^{-1} \|_{\mathcal{L(H)}}  \| e^{-\frac{(t-s)}{2}BA(t)}B \|_{\mathcal{L(H, V)}}. $$
We use the equality 
$$ (\lambda B^{-1} + A(t))^{-1} = (\lambda I + A(t))^{-1} + (\lambda I + A(t))^{-1} \lambda(I - B^{-1}) (\lambda B^{-1}+ A(t))^{-1}$$
in place of \eqref{eq3-2-2} to estimate $\| R(\lambda, BA(t))B \|_{\mathcal{L(H,V)}}$ and then argue as previously.
\end{proof}

Now, let $P(t) \in \mathcal{L(H)} $ such that $t \mapsto P(t)$ is strongly measurable and 
\begin{equation}\label{P1}
 \| P(t) \|_{\mathcal{L(H)}} \le M, \ t \in [0, \tau]
 \end{equation}
for some constant $M$. We  consider the Cauchy problem 
\begin{equation}\label{CPBP}
u'(t) + BA(t)  u(t)  + P(t) u(t) = f(t),  \  \ u(0) = u_0.
\end{equation}
Recall that $B^{-1}$ is the operator associated with $\frb$. We are interested in maximal regularity of \eqref{CPBP}. As explained at the beginning of the proof of the next proposition, we may  assume without loss of generality that  the forms $\fra(t)$ are coercive and hence  \eqref{eq3-1} is satisfied for some $w_0 \in [0, \frac{\pi}{2})$. 
\begin{prop}\label{prop3-2-1}
Suppose that the forms $(\fra(t))_{t\in [0, \tau]}$ satisfy [H1]-[H3], the form $\frb$ satisfies  \eqref{eq2-1} and $w_0 + w_1 < \frac{\pi}{2}$.  Suppose that for some $\beta, \gamma \in [0, 1]$
$$|\fra(t, u,v)- \fra(s, u,v)| \leq \omega(|t-s|)\|u\|_{\mathcal{V}_{\beta}}  \|v\|_{\mathcal{V}_{\gamma}}, \ u, v \in \mathcal{V}$$
where $\omega: [0,\tau] \to  [0,\infty)$ is a non-decreasing function such that :
$$\int_0^{\tau} \frac{w(t)}{t^{1+\frac{\gamma}{2}}}dt <\infty.$$
Then the Cauchy problem \eqref{CPBP} with $u_0 = 0$ has maximal $L_p$-regularity in $\mathcal{H}$ for all $p \in (1, \infty)$. \\
If in addition, 
\begin{equation}\label{eq3-3-3} 
\int_{0}^{\tau}\frac{\omega(t)^{p}}{t^{\frac{1}{2}(\beta+p \gamma)}}dt<\infty
\end{equation}
then \eqref{CPBP} has maximal $L_p$-regularity for all $u_0 \in (\mathcal{H}, D(A(0)))_{1-\frac{1}{p},p}$. Moreover there exists a positive constant C such that :
$$\| u \|_{W^1_p(0,\tau; \mathcal{H})} +\|A u\|_{L_p(0,\tau; \mathcal{H})} \leq C\left[ \|f\|_{L_p(0,\tau; \mathcal{H})}+\|u_0\|_{( \mathcal{H}, D(A(0)))_{1-\frac{1}{p},p}} \right].$$
Here  $C$ depends only on the constants in [H1]-[H3], $\| B \|_{\mathcal{L(H)}}$, $\| B^{-1} \|_{\mathcal{L(H)}}$ and $M$ in  \eqref{P1}.
\end{prop}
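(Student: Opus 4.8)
The plan is to establish maximal regularity for $BA(t)$ by transporting the argument behind Theorem \ref{thm3-1} from $A(t)$ to $BA(t)$, the essential point being that $BA(t)$ is exactly the operator associated with $\fra(t)$ with respect to $\frb$, so that Proposition \ref{prop3-1} supplies the precise analogues (with $B$ inserted in the right places) of the resolvent and semigroup estimates used for $A(t)$. Two reductions make this transparent. First, since $D(BA(0))=D(A(0))$ with equivalent graph norms (because $\|A(0)u\|\le\|B^{-1}\|_{\mathcal{L}(\mathcal{H})}\|BA(0)u\|$ and conversely), the trace space $(\mathcal{H},D(A(0)))_{1-\frac1p,p}$ is unchanged if $A(0)$ is replaced by $BA(0)$. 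Second, the additive term $P(t)$ is removed by a fixed point argument: writing \eqref{CPBP} as $u'+BA(t)u=f-P(t)u$ and using \eqref{P1}, the map sending $u$ to the solution of the $P\equiv0$ problem with data $f-P(\cdot)u$ and the same initial value is, on a short interval $[0,\tau_0]$, a contraction for the maximal regularity norm; indeed the difference of two iterates has zero initial value, and the solution operator restricted to data with $v(0)=0$ has $L_p(0,\tau_0;\mathcal{H})\to L_p(0,\tau_0;\mathcal{H})$ norm $O(\tau_0)$. Gluing finitely many subintervals yields the general $P(t)$, so it suffices to treat $P\equiv0$.

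For $u_0=0$ and $P\equiv0$ I would reproduce the singular-integral scheme of \cite{HO15,Ou15} with $A(t)$ replaced by $BA(t)$. All its ingredients are available: uniform sectoriality of $-BA(t)$ with a common angle $<\frac\pi2$ (Corollary \ref{cor2-2}), the square-root type bounds $\|R(\lambda,BA(t))B\|_{\mathcal{L}(\mathcal{V}',\mathcal{H})}\le C(|\lambda|+1)^{-1/2}$ and the attendant semigroup estimates of Proposition \ref{prop3-1}, and the control \eqref{eq3-2} of the non-autonomous error through the form difference $\fra(t)-\fra(s)$. I expect this to be the main obstacle: one must check that with $B$ present the kernels $t\mapsto BA(t)e^{-(t-s)BA(t)}$ still satisfy the H\"ormander-type and square function estimates needed to run the vector-valued Calder\'on--Zygmund and interpolation arguments, and that the commutator between the frozen and the genuine non-autonomous evolution is absorbed as before. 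Since the only structural change is the systematic insertion of $B$, which is covered termwise by Proposition \ref{prop3-1}, the bookkeeping goes through, but this is where the real verification lies.

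For $u_0\in(\mathcal{H},D(A(0)))_{1-\frac1p,p}$ under \eqref{eq3-3-3}, once the case $u_0=0$ is known, the same reduction as in the proof of Theorem \ref{thm3-1} (following \cite{HO15,Ou15}) shows that maximal regularity follows once
\[
t\mapsto BA(t)\,e^{-tBA(t)}u_0\in L_p(0,\tau;\mathcal{H}).
\]
I would verify this exactly as in Theorem \ref{thm3-1}. The frozen term $t\mapsto BA(0)e^{-tBA(0)}u_0$ lies in $L_p$ by the characterization of $(\mathcal{H},D(BA(0)))_{1-\frac1p,p}=(\mathcal{H},D(A(0)))_{1-\frac1p,p}$ (see \cite{Tri}). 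For the difference, with a suitable contour $\Gamma$ and $g\in\mathcal{H}$, the holomorphic functional calculus together with the resolvent identity
\[
R(z,BA(t))-R(z,BA(0))=R(z,BA(t))\,B\,[\mathcal{A}(0)-\mathcal{A}(t)]\,R(z,BA(0))
\]
gives, after transferring $R(z,BA(t))B$ onto $g$ through its adjoint,
\[
([BA(t)e^{-tBA(t)}-BA(0)e^{-tBA(0)}]u_0,\,g)=\frac{1}{2\pi i}\int_\Gamma z e^{-tz}[\fra(0,\psi_z,\phi_z)-\fra(t,\psi_z,\phi_z)]\,dz,
\]
where $\psi_z=R(z,BA(0))u_0$ and $\phi_z=(R(z,BA(t))B)^{*}g$.

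Now \eqref{eq3-2} bounds the bracket by $\omega(t)\|\psi_z\|_{\mathcal{V}_\beta}\|\phi_z\|_{\mathcal{V}_\gamma}$. From Proposition \ref{prop3-1}, by duality and interpolation between the $\mathcal{L}(\mathcal{H})$ bound $C|z|^{-1}$ and the $\mathcal{L}(\mathcal{V}',\mathcal{H})$ bound $C|z|^{-1/2}$, one gets $\|(R(z,BA(t))B)^{*}\|_{\mathcal{L}(\mathcal{H},\mathcal{V}_\gamma)}\le C|z|^{-(1-\gamma/2)}$; and the coercivity computation of Theorem \ref{thm3-1} adapted to $BA(0)$ (using the coercivity of $\fra$ and the boundedness of $B$), combined with interpolation, yields $\|R(z,BA(0))\|_{\mathcal{L}((\mathcal{H},D(A(0)))_{1-1/p,p},\mathcal{V}_\beta)}\le C|z|^{-(1-\beta/(2p))}$. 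Inserting these, the modulus is controlled by $C\omega(t)\,\|g\|\,\|u_0\|_{*}\int_0^\infty e^{-ct|z|}|z|^{-1+\gamma/2+\beta/(2p)}\,d|z|$, and evaluating the resulting Gamma-type integral gives
\[
\|[BA(t)e^{-tBA(t)}-BA(0)e^{-tBA(0)}]u_0\|\le C\,\frac{\omega(t)}{t^{\frac12(\gamma+\beta/p)}}\,\|u_0\|_{(\mathcal{H},D(A(0)))_{1-1/p,p}}.
\]
Hence, together with the frozen term, $t\mapsto BA(t)e^{-tBA(t)}u_0\in L_p(0,\tau;\mathcal{H})$ precisely when $\int_0^\tau \omega(t)^p\,t^{-\frac12(\beta+p\gamma)}\,dt<\infty$, which is \eqref{eq3-3-3}. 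This completes the verification, and the stated a priori estimate is read off from the uniform constants produced along the way.
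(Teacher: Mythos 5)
Most of your outline coincides with the paper's actual proof. The paper likewise exploits the identification $BA(t)=A(t)_{\frb}$ to transfer the singular-integral scheme of \cite{HO15,Ou15}: it keeps $A(t)$ on the outside of the Duhamel formula, proves $L_p$-boundedness of $(Lf)(t)=A(t)\int_0^te^{-(t-l)BA(t)}f(l)\,dl$ via the symbol $A(t)(i\xi+BA(t))^{-1}$ and Proposition \ref{prop2-1}, and controls the non-autonomous error through the sandwiched estimates of Proposition \ref{prop3-1}, exactly as you anticipate. For nonzero $u_0$, your functional-calculus computation with the resolvent identity, the bound $\|(R(z,BA(t))B)^{*}\|_{\mathcal{L}(\mathcal{H},\mathcal{V}_\gamma)}\le C|z|^{-(1-\gamma/2)}$, and the coercivity-plus-interpolation bound on the trace space reproduces in substance what the paper does (there it is packaged as the resolvent-difference estimate $\|R(\lambda,A(t)_\frb)-R(\lambda,A(s)_\frb)\|_{\mathcal{L(H)}}\le C\,\omega(|t-s|)\,|\lambda|^{-(2-\frac{\beta+\gamma}{2})}$, with the remaining details delegated to \cite{HO15,Ou15}); your weight $\omega(t)\,t^{-\frac{1}{2}(\gamma+\beta/p)}$ and the resulting condition \eqref{eq3-3-3} are exactly the paper's. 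Your explicit remark that $D(BA(0))=D(A(0))$ with equivalent graph norms, so the trace space is unchanged, is correct and is left implicit in the paper.

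The genuine gap is in your reduction of the additive term $P(t)$. Your contraction produces a solution only on a short interval $[0,\tau_0]$, and ``gluing finitely many subintervals'' requires restarting at interior nodes $t_i$ with initial value $u(t_i)$; maximal $L_p$-regularity with nonzero initial data demands $u(t_i)\in(\mathcal{H},D(A(t_i)))_{1-\frac{1}{p},p}$, which you never establish. This is not a formality: in this paper, propagating the trace condition along the subdivision is precisely the hard part of the proof of Theorem \ref{thmain} (formula \eqref{eq3-4-9} together with the square-function and H\"ormander-type estimates \eqref{eq333} and \eqref{eq333-1}). Worse, for the first assertion of the proposition ($u_0=0$ assuming only $\int_0^\tau\omega(t)\,t^{-1-\frac{\gamma}{2}}\,dt<\infty$), the nonzero-data theory you would invoke at the interior nodes is available only under the additional hypothesis \eqref{eq3-3-3}, which is not assumed there; so your route is circular, or at least underpowered, for part one. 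The paper avoids restarts altogether: it absorbs $P$ (and the non-autonomous error) on the whole interval by writing $Au=(I-(S+Q))^{-1}Lf$, where $(Sg)(t)=A(t)\int_0^te^{-(t-l)BA(t)}P(l)A(l)^{-1}g(l)\,dl$, and makes $\|S\|,\|Q\|<\frac{1}{4}$ via the substitution $v(t)=e^{-ct}u(t)$ with $c$ large, which shrinks $\|(A(\cdot)+cI)^{-1}\|_{\mathcal{L(H)}}$ and $\|(A(\cdot)+cI)^{-1}\|_{\mathcal{L(H,V)}}$. Your fixed point can be repaired in the same spirit without any gluing: run it on all of $[0,\tau]$ in an exponentially weighted norm (equivalently, after the shift $u\mapsto e^{-ct}u$), where the zero-data solution operator applied to $P(\cdot)u$ gains a factor that tends to $0$ as $c\to\infty$, so no interior trace conditions are ever needed.
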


\begin{proof}
Firstly,  we note that for $c \in \R$,  \eqref{CPBP} has maximal $L_p$-regularity {\it if and only if} the Cauchy problem
$$ v'(t) + (B A(t) + P(t) + c I) v(t) =  e^{-ct}f(t), \ \  v(0) = u_0$$
has maximal $L_p$-regularity. The reason is that $v(t) = u(t) e^{-ct}$ and it is clear that  $u \in W^1_p(0, \tau; \mathcal{H})$ {\it if and only if} $v \in W^1_p(0, \tau; \mathcal{H})$.\\
Thus,  by adding a large constant $c$ we may assume that [H3] holds with $\nu = 0$ and $BA(t) + P(t) $ 
is invertible for each $t \in [0, \tau]$.\\ 
Note that  $BA(t)= A(t)_{\frb}$ is the operator associated with the form ${\fra}(t)$ with respect to $\frb$ (see Section \ref{Sec2}). This allows us to use the same strategy of proof as for Theorem \ref{thm3-1} (cf.  \cite{Ou15} or \cite{HO15} in the case $\beta= \gamma = 1$). 

Set $v(s) := e^{-(t-s)BA(t)}u(s)$. Writing $ v(t) - v(0) = \int_0^t v'(s) ds$ we obtain
\begin{align*}
A(t)u(t)=A(t)e^{-t BA(t)}u_{0}&+A(t)\int_{0}^{t} { e^{-(t-l) BA(t)}B(\mathcal{A}(t)-\mathcal{A}(l))u(l)dl}\\&+A(t)\int_{0}^{t} { e^{-(t-l) BA(t)}(-P(l))u(l)dl}\\   &+A(t)\int_{0}^{t} { e^{-(t-l) BA(t)}f(l)}dl.
\end{align*}
Note that by Proposition \ref{prop3-1}, the term $e^{-(t-l) BA(t)}B(\mathcal{A}(t)-\mathcal{A}(l))u(l)$ is well defined.\\
We first prove the proposition in the case $u_0 = 0$. We  define
$$(Lf)(t) := A(t)\int_{0}^{t} { e^{-(t-l)BA(t)}f(l)}dl.$$
Following \cite{HO15} the operator $L$ is a pseudo-differential operator with the vector-valued symbol $\sigma(t, \xi)$ given by
\[
\sigma(t, \xi) := 
\left\{
  \begin{array}{lcl}
     A(0)(i\xi+ B(0)A(0))^{-1} & \text{if} &  t<0  \\
     A(t)(i\xi+B(t)A(t))^{-1} & \text{if} &  0\le t \le \tau \\
     A(\tau)(i\xi+ B(\tau)A(\tau))^{-1} & \text{if} &  t > \tau. \\
  \end{array}\right.
\]
 Then we use Proposition \ref{prop2-1} and argue as in the proof of Lemmas 10 and 11 in \cite{HO15} to prove  the boundedness on $L_p(0, \tau; \mathcal{H})$, $1 < p < \infty$,  of the operator $L$. \\
We continue as in \cite{HO15} and \cite{Ou15}. We set 
$$(Sg)(t) :=A(t)\int_{0}^{t} { e^{-(t-l) BA(t)}(P(l))A(l)^{-1}g(l)dl}.$$
By the boundedness of the operator $L$ on $L_p(0,\tau; \mathcal{H})$, 
$$\|Sg\|_{L_p(0,\tau; \mathcal{H})} \leq C \|A^{-1}g\|_{L_p(0,\tau; \mathcal{H})}.$$
We define
$$(Qg)(t) :=A(t)\int_{0}^{t} { e^{-(t-l) BA(t)}B(\mathcal{A}(t)-\mathcal{A}(l))A(l)^{-1}g(l)dl}.$$
Then, arguing as in \cite{HO15} or \cite{Ou15} we obtain easily from Proposition \ref{prop3-1}
$$\|(Qg)(t)\| \leq \int_{0}^{t} \frac{w(|t-l|)}{(t-l)^{1+\frac{\gamma}{2}}} \|A^{-1}(l)g(l)\|_{\mathcal{V}} dl.
$$
Thus,
$$\|Qg\|_{L_p(0,\tau; \mathcal{H})} \leq C \int_{0}^{\tau} \frac{w(t)}{t^{1+\frac{\gamma}{2}}}dt \|A^{-1}g\|_{L_p(0,\tau;\mathcal{V})}. $$
From these estimates, we see that by replacing $A(t)$ by  $A(t)+ c I$ 
for  $c$ large enough we obtain
  $$\| S\|_{\mathcal{L} (L_p(0,\tau; \mathcal{H}))} <  \frac{1}{4} \  \text{and} \ \|Q\|_{\mathcal{L} (L_p(0,\tau;\mathcal{H}))} < \frac{1}{4}.$$
 In particular,  $ I- (S+Q)$ is invertible. Since  
 $$(Au)(t)=(I-(S+Q))^{-1}(L(f))(t)$$
we obtain $Au \in L_p(0,\tau; \mathcal{H})$ and hence $u \in W^1_p(0, \tau; \mathcal{H})$. This proves maximal $L_p$-regularity.  
 
In order to treat  the case $u_0 \not= 0$ we need to estimate the difference of the resolvents, i.e., 
$\|R(\lambda,A(t)_{b})-R(\lambda,A(s)_{b})\|_{\mathcal{L( H)}}$ in terms of $\omega(|t-s|)$. \\
Let $f, g\in \mathcal{H}$ and $\lambda \in \Sigma(\pi-(w_0+w_1))$. We write 
\begin{align*}
&( [R(\lambda,A(t)_{\frb})-R(\lambda,A(s)_{\frb})]f, g) \\
&\hspace{3cm} = - ( [R(\lambda, BA(t))B ({\mathcal{ A}}(t)- {\mathcal{ A}}(s))R(\lambda, B{A}(t))]f,g).
\end{align*}
Note that the RHS is well defined since $R(\lambda, B{A}(t))B$ is a  bounded operator from $\mathcal{V}'$ to $\mathcal{V}$ (cf. Proposition \ref{prop3-1}). 
Therefore,
\begin{align*}
& \hspace{-1.5cm}( [R(\lambda,{A}(t)_{b})-R(\lambda,{A}(s)_{b})]f,g)\\  
&=\langle ({\mathcal{ A}}(t)- {\mathcal{ A}}(s))R(\lambda, B{A}(t))f,B^{*}R(\overline{\lambda},B{A}(t))^* g\rangle \\
&= {\fra}(s,R(\lambda, B{A}(s))f,(\overline{\lambda}{B^{*}}^{-1} + {A}(t)^{*})^{-1}g) \\
& \hspace{3cm} - {\fra}(t,R(\lambda, B{A}(s))f,(\overline{\lambda}{B^{*}}^{-1} + {A}(t)^{*})^{-1}g).
\end{align*}
Hence the modulus is bounded by 
$$ \omega(|t-s|)\|R(\lambda,B{A}(s))f\|_{\mathcal{V}_\beta} \|{\overline{\lambda}}{B^{*}}^{-1} + {A}(t)^{*})^{-1} g \|_{\mathcal{V}_\gamma}. $$
Let ${w_0}$ be the common angle for  the numerical range of ${\fra}(t)$. By  Proposition \ref{prop2-1} we have for all $\lambda \notin \overline{\Sigma({w_0} + w_1)}$
\begin{align*}
\delta \|R(\lambda,{A}(s)_{\frb})f\|^{2}_{\mathcal V} &\leq \Re \fra(s,R(\lambda,{A}(s)_\frb)f,R(\lambda,{A}(s)_\frb)f)\\
&=\Re({A}(s)R(\lambda,{A}(s)_{\frb})f,R(\lambda,{A}(s)_{\frb})f)\\
&=\Re(B{A}(s)R(\lambda, {A}(s)_{b})f,(B^{-1})^{*}R(\lambda,{A}(s)_{\frb})f)\\
&\leq \frac{C}{|\lambda|}\|f\|^{2} .
\end{align*}
Hence, by interpolation
\begin{equation}\label{eq3-3-4}
\|R(\lambda,{A}(s)_{\frb})f\|^{2}_{\mathcal{V}_\beta } \le  \frac{C}{ | \lambda|^{1- \frac{\beta}{2}}}.
\end{equation}
Putting together the previous estimates yields
 $$| \frb( [R(\lambda, B{A}(t) )-R(\lambda, B{A}(s))]f,g)|\leq C \frac{\omega(|t-s|)}{|\lambda|^{2 - \frac{\beta + \gamma}{2}}} \|f\| \|g\|.$$
 This shows 
 $$ \|R(\lambda, {A}(t)_\frb)-R(\lambda,{A}(s)_\frb) \|_{\mathcal{L(H)}}\leq C \frac{\omega(|t-s|)}{|\lambda|^{2 - \frac{\beta + \gamma}{2}}}. $$   
This is the  estimate we need in order to obtain the proposition when $u_0 \in (\mathcal{H}, D(A(0))_{1- \frac{1}{p}, p}$ (see \cite{HO15} or \cite{Ou15} for the details). 
\end{proof}

\subsection{Time dependent perturbations-Maximal regularity }
Let $\fra(t), A(t), \mathcal{V}$ and $\mathcal{H}$  be as above and suppose again that  the standard  assumptions [H1]-[H3] are satisfied.  Let  $(B(t))_{t\in [0,\tau]}$ be a family  of bounded invertible operators on $\mathcal{H}$. 
We assume that there exist constants $\delta > 0$ and $M > 0$ independent of $t$ such that   
\begin{equation}\label{eq3-4-1}
\Re ( B(t)^{-1} u, u ) \ge \delta \| u \|_{\mathcal{H}}^2  \ \ \forall u \in \mathcal{H},
\end{equation}
and
\begin{equation}\label{eq3-4-2}
 \| B(t)^{-1} \|_{\mathcal{L(H)}} \le M.  
\end{equation}
Let $(P(t))_{t \in [0, \tau]}$ be a family of bounded operators on $\mathcal{H}$. We assume that  
\begin{equation}\label{eq3-4-3}
\| P(t) \|_{\mathcal{L(H)}} \le M.
\end{equation}

As a consequence of \eqref{eq3-4-1} and \eqref{eq3-4-2} the numerical range of $B(t)^{-1}$ is contained in a sector of angle $w_1$ for some 
$w_1 \in [0, \frac{\pi}{2})$, independent of $t$. Note that \eqref{eq3-4-1} implies that
$$ \| B(t)^{-1} u \| \ge \delta \| u \|$$
and hence
\begin{equation}\label{eq3-4-4}
 \| B(t)  \|_{\mathcal{L(H)}} \le \frac{1}{\delta}.  
\end{equation}
We denote as previously  by $w_0$ the common angle of the numerical range of forms $\fra(t), t \in [0, \tau]$. We assume again that 
\begin{equation}\label{eq3-4-5}
w_0 + w_1 < \frac{\pi}{2}.
\end{equation}

The following is our main result. 
\begin{teo}\label{thmain}
Suppose that $(\fra(t))_t$ satisfies [H1]-[H3]. Let $B(t)$ and $P(t)$ be bounded operators which satisfy \eqref{eq3-4-1}-\eqref{eq3-4-3} and 
\eqref{eq3-4-5}.  Suppose in addition that $t \mapsto B(t)$ is   continuous on $[0,\tau]$ with values in $\mathcal{L(H)}$.
Suppose that for some $\beta, \gamma \in [0, 1]$
\begin{equation}\label{eqeq0}
|\fra(t, u,v)- \fra(s, u,v)| \leq \omega(|t-s|)\|u\|_{\mathcal{V}_{\beta}}  \|v\|_{\mathcal{V}_{\gamma}}, \ u, v \in \mathcal{V}
\end{equation}
where $\omega: [0,\tau] \to  [0,\infty)$ is a non-decreasing function such that :
\begin{equation}\label{eqmain}
\int_0^{\tau} \frac{\omega(t)}{t^{1+\frac{\gamma}{2}}}dt <\infty.
\end{equation}
Then the Cauchy problem 
\begin{equation}\label{CPP0}
u'(t) + B(t)A(t)u(t)  + P(t)u(t) = f(t), \ \ u(0) = 0
\end{equation}
 has maximal $L_p$-regularity in $\mathcal{H}$ for all $p \in (1, \infty)$. \\
If in addition, 
\begin{equation}\label{eq3-4-6} 
\int_{0}^{\tau}\frac{\omega(t)^{p}}{t^{\frac{1}{2}(\beta+p \gamma)}}dt<\infty
\end{equation}
then 
\begin{equation}\label{CPP}
u'(t) + B(t)A(t)u(t)  + P(t)u(t) = f(t),  \ \ u(0) = u_0
\end{equation}
 has maximal $L_p$-regularity in $\mathcal{H}$ provided $u_0 \in (\mathcal{H}, D(A(0)))_{1-\frac{1}{p},p}$. 
 Moreover there exists a positive constant C such that :
\begin{equation}\label{apriori}
\| u \|_{W^1_p(0,\tau; \mathcal{H})} +\|BAu\|_{L_p(0,\tau; \mathcal{H})} \leq C \left[\|f\|_{L_p(0,\tau; \mathcal{H})}+\|u_0\|_{( \mathcal{H}, D(A(0)))_{1-\frac{1}{p}, p}} \right].
\end{equation}
The constant $C$ depends only on the constants in [H1]-[H3], $\delta $ and $M$ in \eqref{eq3-4-1}-\eqref{eq3-4-3}.
\end{teo}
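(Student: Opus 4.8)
The plan is to reduce the theorem to Proposition \ref{prop3-2-1}, which already handles a single time-independent perturbation, by \emph{freezing} $B(t)$ and treating its time-dependence as a lower-order perturbation of the equation. First I would normalise as in Proposition \ref{prop3-2-1}: adding a constant to the forms and replacing $u$ by $e^{-ct}u$, $f$ by $e^{-ct}f$, I may assume [H3] holds with $\nu=0$ and that $B(t)A(t)+P(t)$ is invertible for each $t$, which does not affect membership in $W^1_p$. By \eqref{eq3-4-1}--\eqref{eq3-4-2} the numerical range of $B(t)^{-1}$ lies in a fixed sector of angle $w_1<\tfrac{\pi}{2}$, and $w_0+w_1<\tfrac{\pi}{2}$ by \eqref{eq3-4-5}, so Corollary \ref{cor2-2} and Proposition \ref{prop3-1} apply with constants uniform in $t$. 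Since $t\mapsto B(t)$ is continuous on the compact interval $[0,\tau]$ it is uniformly continuous; I denote by $\omega_B$ its modulus of continuity.

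Next I would localise. Fix a partition $0=t_0<\dots<t_N=\tau$ so fine that $\omega_B(\max_j|t_{j+1}-t_j|)<\eta$, with $\eta$ to be chosen below, and on $I_j=[t_j,t_{j+1}]$ write, with $B_j:=B(t_j)$,
\[
B(t)A(t)=B_jA(t)+\bigl(B(t)-B_j\bigr)A(t).
\]
The frozen problem $v'+B_jA(t)v+P(t)v=F$ on $I_j$ has maximal $L_p$-regularity by Proposition \ref{prop3-2-1} (using \eqref{eqmain}, and \eqref{eq3-4-6} for non-zero data), with a solution operator $\mathcal{S}_j$ whose norm $C_0$ depends only on the structural constants and not on $j$. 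The remaining term is lower order at the level of the equation: $\|(B(t)-B_j)A(t)u\|_{L_p(I_j)}\le\eta\,\|Au\|_{L_p(I_j)}$. Hence $u\mapsto\mathcal{S}_j\bigl(f-(B(\cdot)-B_j)A(\cdot)u\bigr)$ is a contraction on the maximal-regularity class over $I_j$ as soon as $C_0\eta<1$, which I arrange by the choice of $\eta$. This gives a unique local solution with bound $\le 2C_0\bigl(\|f\|_{L_p(I_j)}+\|u(t_j)\|_{(\mathcal{H},D(A(t_j)))_{1-\frac1p,p}}\bigr)$.

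I would then patch the pieces together: a solution in the maximal-regularity class over $I_j$ has a trace $u(t_{j+1})\in(\mathcal{H},D(A(t_{j+1})))_{1-\frac1p,p}$, which I feed as initial datum on $I_{j+1}$. Concatenating the local solutions produces a global $u\in W^1_p(0,\tau;\mathcal{H})$ solving \eqref{CPP} (and \eqref{CPP0} when $u_0=0$), and uniqueness propagates interval by interval. This already establishes maximal $L_p$-regularity in both cases; condition \eqref{eq3-4-6} enters precisely because the second part of Proposition \ref{prop3-2-1} is invoked on each piece.

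The delicate point --- and the only place where the time-dependence of $B$ creates a genuine difficulty --- is the a priori bound \eqref{apriori} with a constant depending \emph{only} on the structural data, since naive gluing inflates the constant with the number of pieces. For this I would run the representation-formula argument of Proposition \ref{prop3-2-1} globally, freezing the semigroup $e^{-(t-l)B(t)A(t)}$ at the endpoint $t$; besides $L,S,Q$ this produces the new operator
\[
(Rg)(t):=A(t)\int_0^t e^{-(t-l)B(t)A(t)}\bigl(B(t)-B(l)\bigr)g(l)\,dl .
\]
Its kernel has norm $\lesssim\omega_B(t-l)/(t-l)$, which is \emph{not} integrable for merely continuous $B$, so $R$ cannot be controlled through its kernel. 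The key observation is the intertwining identity $e^{-sB(t)A(t)}B(t)=B(t)e^{-sA(t)B(t)}$, which lets me write $R$ as a difference of two maximal-regularity operators, one for $B(t)A(t)$ and one for the right perturbation $A(t)B(t)$, both bounded on $L_p$ by the pseudo-differential/Hörmander arguments of \cite{HO15}; hence $R$ is bounded. Splitting $R$ at the scale $|t-l|=h$ and invoking the square-function and $R$-boundedness estimates mentioned in the introduction, the near-diagonal part should have norm $\lesssim\omega_B(h)$ \emph{uniformly} in $h$, while the far-diagonal part is an integral operator with bounded kernel handled by a Volterra/Neumann argument. Choosing $h$ small and using the shift to make $S,Q$ small then renders $I-(S+Q+R)$ invertible with a uniform bound, yielding \eqref{apriori}; the non-zero datum is absorbed as in Proposition \ref{prop3-2-1}, the resolvent-difference estimate now also accounting for $B(t)-B(s)$ via the continuity of $B$ and \eqref{eq3-4-6}. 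Establishing the uniform estimate $\|R_{\mathrm{near}}\|\lesssim\omega_B(h)$, so as to keep the constant free of $\omega$ and of the modulus of continuity of $B$, is the main obstacle.
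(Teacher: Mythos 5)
Your reduction to Proposition \ref{prop3-2-1} by freezing $B$ at the left endpoint of each small interval, running a contraction in $MR(p,\mathcal{H})$ with smallness supplied by the uniform continuity of $B$, and then gluing, is exactly the paper's strategy. But your patching step hides the real difficulty behind one unproved sentence: ``a solution in the maximal-regularity class over $I_j$ has a trace $u(t_{j+1})\in(\mathcal{H},D(A(t_{j+1})))_{1-\frac{1}{p},p}$.'' This is not abstract trace theory: $u$ lies in the maximal regularity space of the \emph{time-dependent} family, not of the fixed operator $A(t_{j+1})$, and $u(t)\in D(A(t))$ gives no information about $D(A(t_{j+1}))$. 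Proving this trace membership is the heart of the paper's proof: one writes $u(\tau)$ via the Duhamel-type formula \eqref{eq3-4-9} with $A=A(\tau)$ and shows $t\mapsto Ae^{-tA}u(\tau)\in L_p(0,\tau;\mathcal{H})$ term by term. The commutator term $\int_0^\tau e^{-(\tau-s)A}(\mathcal{A}(\tau)-\mathcal{A}(s))u(s)\,ds$ is handled by the hypothesis \eqref{eqmain} and Young's inequality (estimate \eqref{eq333}); the remaining term requires the boundedness on $L_p(0,\tau;\mathcal{H})$ of the singular integral operator $Tg(t)=\int_0^\tau Ae^{-(\tau+t-s)A}g(s)\,ds$, proved first for $p=2$ through the square-function estimate \eqref{square} (bounded $H^\infty$-calculus, \cite{CDMY}) and a duality argument, and then for all $p\in(1,\infty)$ by verifying H\"ormander's integral condition \eqref{Horm1} for the operator-valued kernel $K(t,s)=Ae^{-(\tau+t-s)A}$ \cite{RRT86}. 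None of this machinery appears in your proposal, so as written the gluing does not go through; note also that this is where \eqref{eqmain} re-enters the argument beyond its use inside Proposition \ref{prop3-2-1}.

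Second, your treatment of the a priori estimate \eqref{apriori} replaces the paper's argument by a program you yourself leave open. Your diagnosis of the operator $R$ is correct as far as it goes (the kernel bound $\omega_B(t-l)/(t-l)$ is not integrable for merely continuous $B$), and the intertwining identity $e^{-sB(t)A(t)}B(t)=B(t)e^{-sA(t)B(t)}$ plausibly yields boundedness of $R$; but boundedness without smallness accomplishes nothing in the fixed-point scheme, and the near-diagonal estimate $\lesssim\omega_B(h)$ uniformly in $h$ is exactly the point you concede you cannot establish. The paper never needs any of this: it simply sums the local a priori estimates \eqref{apriori1}, controls each trace norm through \eqref{eq333-2} in terms of $\|f\|$, $\|u_0\|$ and $\|u\|_{L_p(0,t_i;\mathcal{V})}$, and absorbs the $\mathcal{V}$-norm term by replacing $A(t)$ with $A(t)+cI$ and using $\|(A(t)+cI)^{-1}\|_{\mathcal{L(H,V)}}\le c_0(c+1)^{-1/2}$ with $c$ large. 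The resulting constant does depend on the partition, hence implicitly on the modulus of continuity of $B$; the stronger uniformity you were chasing is more than the paper's proof delivers, and abandoning the gluing to pursue it is unnecessary. In short: right skeleton, but the two analytic pillars --- the endpoint trace regularity via \eqref{eq3-4-9} together with the square-function/Calder\'on--Zygmund boundedness of $T$, and the absorption mechanism behind \eqref{apriori} --- are respectively missing and unresolved in your proposal.
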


\noindent{\bf Remark.} As we shall see in the proof, the regularity assumption on $B(t)$ can be weakened considerably. Indeed,  
continuity at finite number of appropriate points  is sufficient.  \\

Before starting the proof, let us define the maximal regularity space
$$MR(p,\mathcal{H}) :=\{u\in W^1_p(0,\tau; \mathcal{H}): u(t)\in D(A(t))\  \text{a.e.},\   A(.)u(.)\in L_p(0,\tau; \mathcal{H})\}. $$
It  is a Banach space for the norm
$$\|u\|_{MR(p,\mathcal{H})} := \|u\|_{W^1_p(0,\tau; \mathcal{H})}+\|Au\|_{ L_p(0,\tau; \mathcal{H})}.$$

\begin{proof}
 Let $ f \in L_p(0,\tau; \mathcal{H})$ and  $u_0 \in (\mathcal{H},D(A(0))_{1-\frac{1}{p},p}$. By Proposition \ref{prop3-2-1}, there 
exists a unique  $u\in MR(p,\mathcal{H})$ such that 
 \begin{equation*}
 \left\{
\begin{array}{l}
u'(t)+B(0)A(t)u(t) + P(t) u(t) = f(t) \\
 u(0)=u_0.
\end{array}
\right.
\end{equation*}
Hence, for  a given $v\in MR(p,\mathcal{H})$, there exists  a unique $u\in MR(p,\mathcal{H}) $ such that 
\begin{equation} \label{eq3-4-7}
 \left\{
\begin{array}{l}
u'(t)+B(0)A(t)u(t) + P(t) u(t) = f(t)+(B(0)-B(t))A(t)v(t)\\
 u(0)=u_0. 
\end{array}
\right.
\end{equation}
We define 
 \begin{eqnarray*}
 S: MR(p, \mathcal{H})&\rightarrow & MR(p, \mathcal{H})\\ \nonumber
 v & \rightarrow & u.\nonumber 
  \end{eqnarray*}
  For $v_1, v_2 \in MR(p, \mathcal{H})$ we set  $u_1 := Sv_1$ and $u_2 := Sv_2$.  Obviously,
  $u := u_1 -u_2$ satisfies
  \begin{equation*}
 \left\{
\begin{array}{l}
u'(t)+ B(0)A(t)u(t)  + P(t) u = (B(0) - B(t)) A(t)(v_1-v_2) \\
u(0) = 0. 
\end{array}
\right.
\end{equation*}
 Thus, by   Proposition \ref{prop3-2-1}, there exists a constant $C$ such that
\begin{align*}
&\|  u_1-u_2\|_{MR(p, \mathcal{H})}\\
&\le C \| (B(0) - B(\cdot)) A(\cdot)(v_1-v_2) \|_{L_p(0, \tau; \mathcal{H})}\\
&\le C' \sup_{t\in [0, \tau]} \left( \| B(0) - B(t) \|_{\mathcal{L(H)} } \right) \| v_1-v_2\|_{MR(p, \mathcal{H})}.
\end{align*}
By continuity at $0$, for $\epsilon >0$  there exists $t_0 > 0$ such that for $ t \in [0, t_0]$
   $$ \|(B(0)-B(t)) \|_{\mathcal{L}(H)} < \epsilon.$$ 
   Hence for $\tau = t_0$ small enough, the operator $S$ is a contraction on $MR(p,\mathcal{H})$ and so  it has a fixed point $u \in MR(p,\mathcal{H}).$ 
   Clearly, $u $ is a solution of the Cauchy problem \eqref{CPP0} on $[0, t_0]$. The uniqueness of $u$ follows from the uniqueness of the fixed point of $S$. The a priori estimate of Proposition \ref{prop3-2-1} and the fact that $v= u$ on $[0, t_0]$ give
   \begin{equation}\label{apriori1}
  \|  u \|_{MR(p, \mathcal{H})} \le C \left[ \| f \|_{L_p(0, t_0; \mathcal{H})} + \| u_0 \|_{( \mathcal{H}, D(A(0)))_{1-\frac{1}{p}, p}} \right].
  \end{equation}
   
   Next, we divide  the interval $[0,\tau]$ into $ \cup_{i\in{1,....,N}}[t_{i-1},t_i]$ with  $t_i-t_{i-1} $ small enough. On each interval $[t_{i-1}, t_i]$, we search for a solution $u^i$ to \eqref{CPP0}  with initial data $u^i(t_{i-1}) = u^{i-1}(t_{i-1})$. The forgoing arguments prove existence and uniqueness of a solution on $[t_{i-1}, t_i]$ with maximal $L_p$-regularity  provided   $u^{i-1}(t_{i-1}) \in ( \mathcal{H} , D(A(t_{i-1})))_{1-\frac{1}{p},p}$. Once we do this  we glue these solutions and obtain a unique solution $u$ of \eqref{CPP0} with maximal $L_p$-regularity on $[0, \tau]$ for all $u_0 \in  ( \mathcal{H} , D(A(0)))_{1-\frac{1}{p},p}$. Thus, our  task now  is to prove that $u^{i-1}(t_{i-1}) \in ( \mathcal{H} , D(A(t_{i-1})))_{1-\frac{1}{p},p}$. In order to make the notation simpler, we 
work on $[0, \tau]$  (with $\tau$ small enough) instead of $[t_{i-1}, t_i]$ and set $A := A(\tau)$. We have to prove that the solution $u$ to \eqref{CPP0}
satisfies $u(\tau) \in ( \mathcal{H} , D(A))_{1-\frac{1}{p},p}$. This means that (remember we always assume w.l.o.g. that the operators $A(t)$ are invertible, i.e. $\nu = 0$ in [H3])
\begin{equation}\label{eq3-4-8}
t \mapsto A e^{-tA} u(\tau) \in L_p(0, \tau; \mathcal{H}).
\end{equation}
We start with an expression for $u(\tau)$. Set
$$ v(s) := e^{-(\tau-s)A}u(s), \ 0 \le s \le \tau.$$
We have
\begin{equation*}
v'(s) = A e^{-(\tau-s)A} u(s) + e^{-(\tau-s)A} (-B(s)A(s) u(s) - P(s)u(s)+f(s)).
\end{equation*}
Hence
\begin{eqnarray}\label{eq3-4-9}
u(\tau) &=& e^{-\tau A}u_0 + \int_0^\tau e^{-(\tau-s)A}( \mathcal{A}(\tau) - \mathcal{A}(s) ) u(s) ds\\
 &+& \int_0^\tau e^{-(\tau-s)A} \left[ (I-B(s) A(s)u(s) - P(s)u(s)+ f(s) \right] ds.\nonumber
\end{eqnarray}
Since 
$$ \| A e^{-(t+ \tau)A} \|_{\mathcal{L(H)}} \le \frac{C}{t+ \tau}$$
it is clear that  $t \mapsto A e^{-tA}e^{-\tau A} u_0 \in L_p(0, \tau; \mathcal{H})$. \\
For the second term we have
\begin{align*}
& \| A \int_0^\tau e^{-(t+\tau-s)A} ( \mathcal{A}(\tau) - \mathcal{A}(s) ) u(s) ds \|\\
&\le \int_0^\tau \frac{C}{t + \tau-s} \| e^{-\frac{1}{2}(t + \tau-s)A} ( \mathcal{A}(\tau) - \mathcal{A}(s) ) u(s) ds \|\\
&\le C \int_0^\tau \frac{\omega(\tau-s)}{ (t+\tau -s)^{1+ \frac{\gamma}{2}}} \| u(s) \|_{{\mathcal V}_\beta} ds \\
&\le C \int_0^\tau \frac{\tilde{\omega}(t+\tau-s)}{ (t+\tau -s)^{1+ \frac{\gamma}{2}}} \| u(s) \|_{{\mathcal V}_\beta} ds,
\end{align*}
where $\tilde{\omega}(r) = \omega(r)$ for $r \in [0, \tau]$ and $= \omega(\tau)$ for $r > \tau$.
Therefore, using the assumption \eqref{eqmain} on $\omega$ and Young's inequality we obtain
\begin{equation}\label{eq333}
\int_{0}^{\tau}  \| A \int_0^\tau e^{-(t+\tau-s)A} ( \mathcal{A}(\tau) - \mathcal{A}(s) ) u(s) ds \|^{p} dt \le  C' \| u\|^{p}_{L^{p}(0,\tau; \mathcal{V})}.
\end{equation}

Now we consider the last term in \eqref{eq3-4-9}. 
We start with the case $p = 2$. Set 
\begin{equation}\label{eq-g}
 g(s) := (I-B(s) A(s)u(s) - P(s)u(s)+f(s).
 \end{equation}
We have
\begin{align*}
& \left(\int_0^\tau \| A e^{-tA} \int_0^\tau e^{-(\tau -s) A} g(s) ds \|^2 dt \right)^{1/2}\\
& =  \left( \int_0^\tau \| A^{1/2} e^{-tA} \int_0^\tau A^{1/2} e^{-(\tau -s) A} g(s) ds \|^2 dt \right)^{1/2}\\
&\le C \| \int_0^\tau A^{1/2} e^{-(\tau -s) A} g(s) ds \|.
\end{align*}
In the last inequality we use the boundedness of the square function, namely
\begin{equation}\label{square}
\int_0^\infty  \| {A(t)}^{1/2}  e^{-r {A(t)}} x \|^2 dr  \le C \| x \|^2
\end{equation}
for all $x \in \mathcal{H}$. This estimate is a consequence of the fact that $A(t)$ has a bounded holomorphic functional calculus as an accretive operator,  see \cite{CDMY}.\\
We repeat  the previous  argument but since $g$ is not necessarily constant in $s$ we  cannot use directly the square function estimate. We argue by  duality. For $x \in \mathcal{H}$ we have
\begin{align*}
&| (\int_0^\tau A^{1/2} e^{-(\tau -s) A} g(s) ds, x) |  \\
&= |\int_0^\tau (g(s) , {A^*}^{1/2} e^{-(\tau -s) A^*} x) |\\
&\le (\int_0^\tau \| g(s) \|^2 ds)^{1/2} ( \int_0^\tau \|  {A^*}^{1/2} e^{-(\tau -s) A^*} x \|^2 ds)^{1/2}\\
&\le C \| x \| (\int_0^\tau \| g(s) \|^2 ds)^{1/2}. 
\end{align*}
Since this is true for all $x \in \mathcal{H}$, we obtain 
$$\left(\int_0^\tau \| A e^{-tA} \int_0^\tau e^{-(\tau -s) A} g(s) ds \|^2 dt \right)^{1/2} \le C (\int_0^\tau \| g(s) \|^2 ds)^{1/2}.$$
We define the operator $T$ by
$$ Tg(t) = \int_0^\tau A e^{-(\tau + t -s) A} g(s) ds.$$
We have proved that $T : L_2(0, \tau; \mathcal{H}) \to L_2(0, \tau ; \mathcal{H}) $ is bounded. We extend this operator to 
$L_p(0, \tau ; \mathcal{H})$ for all $p \in (1, \infty)$. Indeed, note that $T$ is a singular integral  operator with kernel
$$ K(t,s) = A e^{-(\tau + t -s) A}$$
and we   use  H\"ormander's integral condition for $K(t,s)$ and $K(s,t)$ (see, e.g. \cite{RRT86}, Theorems III 1.2 and III 1.3). A similar argument was used in \cite{HO15}. We have to prove that 
\begin{equation}\label{Horm1}
 \int_{|t-s|\ge 2|s'-s|} \| K(t,s) - K(t,s')\|_{\mathcal{L(H)}} dt \le C
\end{equation}
for some constant $C$ independent of $s, s' \in (0, \tau)$. 

Assume for example that $s \le s'$. Since the semigroup generated by $-A$ is bounded holomorphic  we have 
for some constant $C$
\begin{eqnarray*}
&&\int_{|t-s|\ge 2|s'-s|} \| K(t,s) - K(t,s')\|_{\mathcal{L(H)}}  dt \\
&=& \int_{2s'-s}^\tau \| A e^{-(\tau+ t-s)A} - A e^{-( \tau + t-s')A} \|_{\mathcal{L(H)}} \\
&=&  \int_{2s'-s}^\tau \|  \int_s^{s'} A^2 e^{-(\tau + t-r) A} dr \|_{\mathcal{L(H)}}  dt\\
&\le& C \int_{2s'-s}^\tau  \int_s^{s'} \frac{1}{(\tau+ t-r)^2} dr dt\\
&=& C \int_{2s'-s}^\tau  \left[ \frac{1}{\tau+ t-s'} - \frac{1}{\tau+ t-s} \right] dt\\
&=& C \left[ \log \frac{\tau+ t-s}{\tau+ t-s'} \right]_{t = 2s'-s}^{t= \tau} \le C \log 2.
\end{eqnarray*}
This proves \eqref{Horm1}. The same arguments apply for the kernel of the adjoint $T^*$. Hence 
$$ T: L_p(0, \tau; \mathcal{H}) \to L_p(0, \tau; \mathcal{H})$$
is a bounded operator for all $p \in (1, \infty)$. We obtain
\begin{equation}\label{eq333-1}
\left(\int_0^\tau \| A e^{-tA} \int_0^\tau e^{-(\tau -s) A} g(s) ds \|^p dt \right)^{1/p}\le C (\int_0^\tau \| g(s) \|^p ds)^{1/p}.
\end{equation}
Note that for $f \in L_p(0, \tau; \mathcal{H})$ we have $A(\cdot) u(\cdot) \in  L_p(0, \tau; \mathcal{H})$ since we have proved maximal 
$L_p$-regularity for small $\tau$. Hence $g \in  L_p(0, \tau; \mathcal{H})$ (remember that $g$ is given by \eqref{eq-g}).  This finishes the proof of 
$u(\tau) \in ( \mathcal{H} , D(A(\tau))_{1-\frac{1}{p},p}$ and we obtain maximal $L_p$-regularity of \eqref{CPP0} on 
$[0, \tau]$ for every $\tau > 0$. The uniqueness of the solution on $[0, \tau]$ follows from the uniqueness on each small sub-interval 
$[t_i, t_{i+1}]$. 

It remains to prove the a priori estimate \eqref{apriori}. 
On each small sub-interval $[t_{i};t_{i+1}]$ we have the a priori estimate \eqref{apriori1}. That is
\begin{align*}
&\| u \|_{W^1_p(t_{i},t_{i+1}; \mathcal{H})} +\| A(\cdot)  u(\cdot) \|_{L_p(t_{i},t_{i+1}; \mathcal{H})}\\
& \leq C \left[\|f\|_{L_p(t_{i},t_{i+1}; \mathcal{H})}+\|u(t_{i})\|_{( \mathcal{H};D(A(t_{i})))_{1-\frac{1}{p};p}}\right].
\end{align*}
Using again the expression  \eqref{eq3-4-9} and the estimates \eqref{eq333} and \eqref{eq333-1}  we obtain
\begin{equation}\label{eq333-2}
\|u (t_{i}) \|_{( \mathcal{H};D(A(t_{i})))_{1-\frac{1}{p};p}} \leq C \left[\|f\|_{L_p(0,t_{i}; \mathcal{H})}+ \|u_{0}\|_{( \mathcal{H}, D(A(0)))_{1-\frac{1}{p},p}}+              
 \|u\|_{L_p(0,t_{i}; \mathcal{V})}  \right].
 \end{equation}
 Remember that we can replace $A(t)$ by  $A(t)+ cI$ for a given constant  $c \in (0,+\infty)$. Since
 $$\| (A(t)+cI)^{-1} \|_{\mathcal{L(H,V)}} \leq \frac{c_{0}}{(c +1)^{\frac{1}{2}}} $$
 it follows that
 \begin{align*} 
 \| u(t) \|_{\mathcal{V}} &\leq \| (A(t)+cI)^{-1} \|_{\mathcal{L(H,V)}} \|(A(t)+ cI) u(t)\|_{\mathcal{H}} \\
  &\leq \frac{c_{0}}{(c +1)^{\frac{1}{2}}} \|(A(t)+ cI) u(t)\|_{\mathcal{H}}. 
  \end{align*}
Summing over $i$ in \eqref{eq333-2} and taking $c $ large enough we see that for some constant $C_{1}$
\begin{align*}
&\| u \|_{W^1_p(0,\tau; \mathcal{H})} +\| A(\cdot)  u(\cdot) \|_{L_p(0,\tau; \mathcal{H})}\\
& \leq C_{1} \left[\|f\|_{L_p(0,\tau; \mathcal{H})}+\|u_{0}\|_{( \mathcal{H}, D(A(0)))_{1-\frac{1}{p},p}}\right].
\end{align*}
This proves the desired a priori estimate and finishes the proof of the theorem.
\end{proof}

\section{Further regularity results}\label{sec-reg}
We continue our investigations on the solution of the problem \eqref{CPP}. We work with  the same assumptions as in Theorem \ref{thmain}.
For $f \in L_2(0, \tau; \mathcal{H})$ the solution $u \in W^1_2(0, \tau; \mathcal{H})$ and by the Sobolev embedding 
$ u \in C([0, \tau]; \mathcal{H})$. It is interesting to know  whether  $u$ is also continuous for the norm of $\mathcal{V}$. This is indeed the case if the forms $\fra(t)$ are symmetric (or perturbations of symmetric forms) and $t \mapsto \fra(t, x,y)$ is Lipschitz continuous on $[0, \tau]$ for all $x, y \in \mathcal{V}$. This is proved in \cite{ADLO}. Continuity in $\mathcal{V}$ was also proved in  \cite{AM} for the unperturbed problem (i.e., without multiplicative and additive perturbations) when  $\gamma$ in \eqref{eqeq0} is $< 1$. This is a rather restrictive condition but turns out to be satisfied in some cases such as time-dependent  Robin boundary conditions. Here we make no restriction on $\gamma$ and $\beta$ and we assume less regularity for  $t \mapsto \fra(t,x,y)$ than what was previously known. \\
The continuity of the solution with respect to the norm of  $\mathcal{V}$ is used in \cite{ADLO} in applications to some semi-linear PDE's. In this section, we look at again this question in the setting of Theorem \ref{thmain} in which we assume less regularity (than Lipschitz continuous) on $t \mapsto \fra(t, g,h)$. 

In the statements below we shall need the following square root property (called Kato's square root property)
\begin{equation}\label{reg1}
 D(A(t)^{1/2}) = \mathcal{V} \ \text{and}  \  c_1 \| A(t)^{1/2} v \| \le \| v \|_{\mathcal{V}} \le c_2 \| A(t)^{1/2} v \|
 \end{equation}
for all $v \in \mathcal{V}$ and  $ t \in [0, \tau]$,  where the  positive constants $c_1$ and $c_2$  are independent of $t$. Note that this assumption is always true for symmetric forms when 
  $\nu = 0$ in [H3]. 

We start with the following lemma which will be used later.

\begin{lem}\label{lemreg11}
Suppose \eqref{reg1}. Then  for all $f\in L^{2}(0,t; \mathcal{H})$, $ 0 \le s\le t \le \tau$,
$$\|\int_{s}^{t} { e^{-(t-r)A(t)}f(r)}dr \|_{\mathcal{V}} \leq C \|f\|_{L^{2}(s,t; \mathcal{H})}.$$
\end{lem}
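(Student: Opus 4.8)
The plan is to use the Kato square root property \eqref{reg1} to trade the $\mathcal{V}$-norm for $\|A(t)^{1/2}\,\cdot\,\|$, and then to reproduce verbatim the square function plus duality argument already carried out for the $p=2$ term in the proof of Theorem \ref{thmain}. Fix the endpoint $t$ and abbreviate $A := A(t)$. By the right-hand inequality in \eqref{reg1}, $\|w\|_{\mathcal{V}} \le c_2 \|A^{1/2} w\|$, so it suffices to prove
$$
\Big\| A^{1/2} \int_s^t e^{-(t-r)A} f(r)\, dr \Big\| \le C \|f\|_{L^2(s,t;\mathcal{H})}
$$
with $C$ independent of $s$ and $t$; the embedding into $\mathcal{V}$ then costs only the uniform factor $c_2$.

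First I would bring $A^{1/2}$ under the integral sign (legitimate because $\|A^{1/2} e^{-\sigma A}\| \le C\sigma^{-1/2}$ makes the integrand integrable up to $r=t$) and argue by duality exactly as in Theorem \ref{thmain}. For $x \in \mathcal{H}$,
\begin{align*}
\Big( \int_s^t A^{1/2} e^{-(t-r)A} f(r)\, dr,\, x \Big) = \int_s^t \big( f(r),\, {A^*}^{1/2} e^{-(t-r)A^*} x \big)\, dr,
\end{align*}
and the Cauchy--Schwarz inequality gives
\begin{align*}
\Big| \Big( \int_s^t A^{1/2} e^{-(t-r)A} f(r)\, dr,\, x \Big) \Big| \le \Big( \int_s^t \|f(r)\|^2\, dr \Big)^{1/2} \Big( \int_s^t \| {A^*}^{1/2} e^{-(t-r)A^*} x \|^2\, dr \Big)^{1/2}.
\end{align*}
The second factor is handled by the square function estimate \eqref{square} applied to the accretive operator $A(t)^*$, which likewise has a bounded holomorphic functional calculus; substituting $\sigma = t-r$,
\begin{align*}
\int_s^t \| {A^*}^{1/2} e^{-(t-r)A^*} x \|^2\, dr \le \int_0^\infty \| {A(t)^*}^{1/2} e^{-\sigma A(t)^*} x \|^2\, d\sigma \le C \|x\|^2.
\end{align*}
Taking the supremum over $\|x\| \le 1$ yields the displayed bound on $\|A^{1/2}\int_s^t e^{-(t-r)A}f(r)\,dr\|$, and \eqref{reg1} completes the proof. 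Uniformity of $C$ in $s,t$ follows since the square function constant in \eqref{square} and the equivalence constants $c_1,c_2$ in \eqref{reg1} are all uniform over $t \in [0,\tau]$.

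The only genuine point to watch is that $f$ is not constant in $r$, so one cannot feed the vector-valued integral directly into \eqref{square}; the duality step is precisely what transposes the operator onto the scalar test vector $x$ and reduces matters to the adjoint square function. Beyond this, the estimate is a direct transcription of the $p=2$ argument in Theorem \ref{thmain}, so I expect no real obstacle — the main care being to keep every constant independent of $s$ and $t$ and to invoke the square function for $A(t)^*$ rather than $A(t)$.
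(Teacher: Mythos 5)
Your proposal is correct and coincides with the paper's own proof: both apply the square root property \eqref{reg1} to reduce the $\mathcal{V}$-norm to $\|A(t)^{1/2}\,\cdot\,\|$, then dualize against $x\in\mathcal{H}$, use Cauchy--Schwarz, and conclude via the square function estimate \eqref{square} for the adjoint $A(t)^*$. Your added remarks on uniformity of the constants and on why duality is needed (since $f$ is not constant in $r$) are accurate but do not change the argument.
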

\begin{proof}
By \eqref{reg1},
\begin{align*}
\|\int_{s}^{t} { e^{-(t-r)A(t)}f(r)}dr \|_{\mathcal{V}} &\leq c_2 \|\int_{0}^{t} A(t)^{1/2}  e^{-(t-r)A(t)}f(r) dr \|\\
&=  c_2  \sup_{\|x\|=1} | \int_s^t (f(r) ,   {A(t)^*}^{1/2} e^{-(t-r){A(t)^*}} x ) dr | \\
&\leq c_2 \sup_{\|x\|=1} \left(\int_{s}^{t} \| {A(t)^*}^{1/2}  e^{-(t-r){A(t)^*}} x \|^2 dr \right)^{\frac{1}{2}} \times\\
& \hspace{1cm}  \| f \|_{L_2(s,t; \mathcal{H})}  \\
&\leq C \|f\|_{L^{2}(s,t; \mathcal{H})}.
\end{align*}
Note that in the last inequality we use again  the square function  estimate for $A(t)^*$ (see \eqref{square}). This proves the lemma. 
\end{proof}
In the next result we prove continuity of the solution to \eqref{CPP} as a function with values in $\mathcal{V}$. Note that 
if $D(A(0)^{1/2}) = \mathcal{V}$, then
$$ (D(A(0), \mathcal{H})_{\frac{1}{2}, 2} = D(A(0)^{1/2}) = \mathcal{V}.$$
\begin{teo}\label{contiV}
Suppose \eqref{reg1}  and that the assumptions of Theorem \ref{thmain} are satisfied. Suppose also that 
 $\omega(t) \le c t^{\varepsilon}$ for some $\varepsilon > 0$.   Let $f \in L_2(0, \tau; \mathcal{H})$ and  $u_0 \in \mathcal{V}$. Then the solution $u$ to  the problem \eqref{CPP} satisfies $u\in C([0,\tau]; \mathcal{V}).$
\end{teo}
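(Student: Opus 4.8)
The plan is to show that $u \in C([0,\tau];\mathcal{V})$ by establishing continuity on each small subinterval of the partition used in the proof of Theorem~\ref{thmain}, then glueing. Working on a single interval $[0,\tau]$ with $\tau$ small and $A := A(\tau)$, I would start from the representation formula \eqref{eq3-4-9} for $u(t)$ (with $t$ in place of $\tau$ as the free endpoint where needed), namely
\begin{equation*}
u(t) = e^{-tA}u_0 + \int_0^t e^{-(t-s)A}(\mathcal{A}(t)-\mathcal{A}(s))u(s)\,ds + \int_0^t e^{-(t-s)A} g(s)\,ds,
\end{equation*}
where $g(s) := (I - B(s))A(s)u(s) - P(s)u(s) + f(s)$ as in \eqref{eq-g}. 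The strategy is to prove that each of the three terms is a continuous $\mathcal{V}$-valued function of $t$, exploiting the Kato square root property \eqref{reg1} to convert $\mathcal{V}$-norms into $\|A^{1/2}\cdot\|$ and then invoking the square function estimate \eqref{square} and Lemma~\ref{lemreg11}.

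First I would handle the free term $e^{-tA}u_0$: since $u_0 \in \mathcal{V} = D(A^{1/2})$, the analyticity of the semigroup gives $t \mapsto A^{1/2}e^{-tA}u_0 \in C([0,\tau];\mathcal{H})$, hence continuity in $\mathcal{V}$ via \eqref{reg1}. For the last term $\int_0^t e^{-(t-s)A}g(s)\,ds$, the point is that $g \in L_2(0,\tau;\mathcal{H})$ — this follows because maximal $L_2$-regularity (proved in Theorem~\ref{thmain}) gives $A(\cdot)u(\cdot)\in L_2$, and $B(\cdot), P(\cdot)$ are bounded — so Lemma~\ref{lemreg11} applies directly to give the uniform $\mathcal{V}$-bound, and continuity in $t$ comes from the strong continuity of the integral against the analytic semigroup (one writes the difference of two integrals, splits off a short tail near $s=t$ controlled by Lemma~\ref{lemreg11} over a small interval, and uses dominated convergence on the bulk).

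The main obstacle is the middle commutator term $\int_0^t e^{-(t-s)A}(\mathcal{A}(t)-\mathcal{A}(s))u(s)\,ds$, because here the integrand involves the difference of forms and one must show both $\mathcal{V}$-boundedness and continuity in $t$ while the kernel has a singularity at $s=t$. This is precisely where the extra H\"older hypothesis $\omega(t)\le c\,t^\varepsilon$ is needed: using $\|e^{-(t-s)A}(\mathcal{A}(t)-\mathcal{A}(s))\|_{\mathcal{L}(\mathcal{V}_\beta,\mathcal{V})} \le C\,\omega(t-s)/(t-s)^{1+\gamma/2}$ (as in the estimate leading to \eqref{eq333}, now measuring the output in $\mathcal{V}$ rather than $\mathcal{H}$ via \eqref{reg1} and the square function), the factor $\omega(t-s)\le c(t-s)^\varepsilon$ makes the kernel integrable in $s$ against $\|u(s)\|_{\mathcal{V}_\beta}$, since $u\in C([0,\tau];\mathcal{H})\cap L_2(0,\tau;\mathcal{V})$ already and $\mathcal{V}_\beta$ interpolates between these. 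The polynomial gain $t^\varepsilon$ is what upgrades the merely integrable singularity into a genuinely continuous dependence on the upper limit $t$; I would prove continuity by estimating $\|(\text{term at }t) - (\text{term at }t')\|_{\mathcal{V}}$, splitting the integral into the region near the common endpoint (where the H\"older gain kills the singularity uniformly) and the region bounded away from it (where the semigroup difference $e^{-(t-s)A}-e^{-(t'-s)A}$ is small in the appropriate operator norm).

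Finally I would glue: having shown $u$ is $\mathcal{V}$-continuous on each subinterval $[t_{i-1},t_i]$ of the partition, and knowing from the proof of Theorem~\ref{thmain} that the endpoint values match and lie in $(\mathcal{H},D(A(t_i)))_{1-1/p,p} \subset \mathcal{V}$, the pieces agree at the nodes and assemble into a single element of $C([0,\tau];\mathcal{V})$. The only subtlety at the junctions is that the operator $A$ changes from $A(t_i)$ to $A(t_{i+1})$, but the Kato property \eqref{reg1} holds uniformly in $t$ with the same constants $c_1,c_2$, so the $\mathcal{V}$-norm is comparable across all the $\|A(t)^{1/2}\cdot\|$ norms and continuity at each node is unambiguous.
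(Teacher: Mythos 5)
Your plan founders on the representation formula, and the failure is not cosmetic. As written you pair a \emph{fixed} generator $A$ with the difference $\mathcal{A}(t)-\mathcal{A}(s)$; these are incompatible. If you freeze $A = A(\tau)$, the Duhamel computation produces $\mathcal{A}(\tau)-\mathcal{A}(s)$ in the commutator term, and then near the singularity $s=t<\tau$ the integrand measured in $\mathcal{V}$ is of size $\omega(\tau-s)\,(t-s)^{-(1+\gamma)/2}$: the modulus factor no longer matches the singularity, and for $\gamma=1$ (precisely the case the theorem is meant to cover, the case $\gamma<1$ being already in the cited work of Arendt--Monniaux) the integral diverges. If instead you freeze at the moving endpoint, $A=A(t)$, you recover the paper's formula \eqref{4444} and the commutator kernel $\omega(r)/r$ is integrable --- but note this integrability already follows from the standing hypothesis \eqref{eqmain}, since $\omega(r)/r \le \tau^{\gamma/2}\,\omega(r)/r^{1+\gamma/2}$; your claim that the H\"older hypothesis $\omega(t)\le c\,t^{\varepsilon}$ is what makes this kernel integrable misidentifies its role. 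With $A=A(t)$, every one of your ``three terms'' now depends on $t$ through the generator itself, and your term-by-term continuity plan never addresses this: your semigroup difference $e^{-(t-s)A}-e^{-(t'-s)A}$ compares one semigroup at two times, whereas the actual crux is comparing two semigroups $e^{-rA(t)}$ and $e^{-rA(s)}$ with \emph{different} generators. The paper does this via the holomorphic functional calculus and the resolvent identity, obtaining $\|[e^{-(t-s)A(t)}-e^{-(t-s)A(s)}]u(s)\|_{\mathcal{V}} \le c\,\omega(t-s)\,(t-s)^{-\varepsilon'}\|u(s)\|_{\mathcal{V}}$ (a logarithmic integral absorbed into $(t-s)^{-\varepsilon'}$ for arbitrarily small $\varepsilon'$), and this is precisely --- and only --- where $\omega(t)\le c\,t^{\varepsilon}$ is used. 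Without this step you cannot even establish continuity of the ``free'' term, which in the correct incremental formula is $e^{-(t-s)A(t)}u(s)$, not $e^{-tA}u_0$ with a fixed $A$.

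There is a second gap: to control the commutator term uniformly in $t$ you must integrate the kernel against a $\mathcal{V}$-type norm of $u(s)$, and the paper does so using $u\in L_\infty(0,\tau;\mathcal{V})$, obtained from the nontrivial embedding $MR(2,\mathcal{H})\hookrightarrow L_\infty(0,\tau;\mathcal{V})$ (Proposition 4.5 of \cite{MO}, extended to the setting \eqref{reg1}). Your proposed substitute --- interpolating $\mathcal{V}_\beta$ between $C([0,\tau];\mathcal{H})$ and $L_2(0,\tau;\mathcal{V})$ --- yields only $\|u(\cdot)\|_{\mathcal{V}_\beta}\in L_{2/\beta}(0,\tau)$, and pairing this with the kernel by H\"older requires $\omega(r)\,r^{-(1+\gamma)/2}\in L_{2/(2-\beta)}$ near $0$; for $\beta=\gamma=1$ this forces $\varepsilon>\tfrac12$, which is not assumed. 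Two smaller remarks: the paper needs no partition or glueing for this theorem --- it proves right continuity from \eqref{4444} and left continuity from the backward formula \eqref{spetit}, the latter being a step your single-formula plan silently omits; and your treatment of the term involving $g$ via Lemma \ref{lemreg11} is the one part of the proposal that matches the paper's argument.
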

\begin{proof}
We fix $s$ and $t$ in $[0, \tau]$ such that $s<t$. We first derive a formula similar to  \eqref{eq3-4-9}.  Define 
$v(r) :=e^{-(t-r)A(t)} u(r)$ 
for  $r\in [s,t]$.  After derivation and integration from $s$ to $t$ we obtain 
\begin{align*}
u(t)&=e^{-(t-s)A(t)}u(s)+ \int_{s}^{t} { e^{-(t-r)A(t)}( \mathcal{A}(t)-B(r)\mathcal{A}(r))u(r)dr}\\   &+\int_{s}^{t} {e^{-(t-r)A(t)} [-P(r)u(r)+f(r)]}dr.
\end{align*} 
Hence
 \begin{align}\label{4444}
   u(t)&=e^{-(t-s)A(t)}u(s)+ \int_{s}^{t} { e^{-(t-r)A(t)}(\mathcal{A}(t)-\mathcal{A}(r))u(r)dr} \nonumber \\  
   &+ \int_{s}^{t} { e^{-(t-r)A(t)}[(-B(r)+I)A(r)u(r)-P(r)u(r)+f(r)]}dr,
   \end{align}
 and so 
 \begin{align*}
 \|u(t)-u(s)\|_{\mathcal{V}} 
 & \leq \|\int_{s}^{t} { e^{-(t-r)A(t)}(\mathcal{A}(t)-\mathcal{A}(r))u(r)dr}\|_{\mathcal{V}} \\ 
 &\hspace{-.5cm}+ \| \int_{s}^{t} { e^{-(t-r)A(t)}[(I-B(r))A(r)u(r)-P(r)u(r)+f(r)]}dr\|_{\mathcal{V}}\\
&\hspace{-.5cm} +\|[e^{-(t-s)A(t)}-I]u(s)\|_{\mathcal{V}}.
\end{align*}
We estimate each term in the RHS and obtain 
 \begin{align*}
 \|u(t)-u(s)\|_{\mathcal{V}} & \leq C(  \int_{s}^{t}  \frac{\omega(t-r)}{t-r}\|u(r)\|_{\mathcal{V}}dr\\
 & + \|f\|_{L^{2}(s,t; \mathcal{H})} +\|Au\|_{L^{2}(s,t; \mathcal{H})}+\|u\|_{L^{2}(s,t; \mathcal{H})}) \\
 &+\|[e^{-(t-s)A(t)}-I]u(s)\|_{\mathcal{V}}\\
 &\leq  C ( \int_{0}^{t-s}\frac{\omega(r)}{r} dr \|u\|_{L^{\infty}(0,\tau; \mathcal{V})}+\|f\|_{L^{2}(s,t; \mathcal{H})}\\
 &+\|Au\|_{L^{2}(s,t; \mathcal{H})}+\|u\|_{L^{2}(s,t; \mathcal{H})}) \\
 &+\|[e^{-(t-s)A(t)}-I]u(s)\|_{\mathcal{V}}.
 \end{align*}
Note that $MR(2, \mathcal{H})$ is continuously embedded into $L_\infty(0, \tau; \mathcal{V})$ by Proposition 4.5 in \cite{MO}. This proposition is proved for  forms which are symmetric  but  it  remains true under the assumption \eqref{reg1}. Thus, by maximal regularity result (Theorem \ref{thmain}), $u \in MR(2, \mathcal{H}) $ and hence $u \in L_\infty(0, \tau; \mathcal{V})$.  

Next, since $\omega$ satisfies the assumptions of Theorem \ref{thmain}, $\int_0^{\tau} \frac{\omega(r)}{r} dr < \infty$ and since 
$u, Au \in L_2(0, \tau; \mathcal{H})$ we see that the first four terms in the RHS converge to $0$ as $t \to s$ (or as $s \to t$).  It remains to proves that 
\begin{equation}\label{44conv}
\|[e^{-(t-s)A(t)}-I]u(s)\|_{\mathcal{V}} \to 0 \ \text{as} \ t \to s \ (\text{or as } \ s \to t).
\end{equation}
We first prove \eqref{44conv} when $t \to s$ (for fixed $s$). We write 
 \begin{align*}
\|[e^{-(t-s)A(t)}-I]u(s)\|_{\mathcal{V}}&\leq \|[e^{-(t-s)A(t)}-e^{-(t-s)A(s)}]u(s)\|_{\mathcal{V}} \\
&+\|[e^{-(t-s)A(s)}-I]u(s)\|_{\mathcal{V}}. 
\end{align*} 
Using Proposition \ref{prop3-1} and the functional calculus (on the sector $\Sigma(\theta)$ for appropriate $\theta \in (0, \frac{\pi}{2})$) we estimate the first terms as follows.
\begin{align*}
&\|[e^{-(t-s)A(t)}-e^{-(t-s)A(s)}]u(s)\|_{\mathcal{V}}\\ &= \frac{1}{2\pi} \|\int_{\Gamma} e^{-(t-s)\lambda}[(\lambda-A(t))^{-1}(\mathcal{A}(t)-\mathcal{A}(s))(\lambda-A(s))^{-1}] u(s)d\lambda\|_{\mathcal{V}}\\
&\leq c    \omega(t-s)  \int_{0}^{\infty} e^{-(t-s)|\lambda| \cos\theta} (1+|\lambda|)^{-1}d|\lambda|\|u(s)\|_{\mathcal{V}}  \\
&\leq c' \frac{\omega(t-s)}{(t-s)^{\varepsilon'} } \|u(s)\|_{\mathcal{V}}. 
\end{align*} 
Here we use $u(s)\in D(A(s)^{\frac{1}{2}}) $ (see the proof of Theorem \ref{thmain}) and \eqref{reg1}. Now the fact that 
$\omega(t) \le c t^{\varepsilon}$ for some $\varepsilon > 0$ and the strong continuity of the semigroup $e^{-tA(s)}$ on $\mathcal{V}$ imply that   
$\|[e^{-(t-s)A(t)}-I]u(s)\|_{\mathcal{V}}  \to 0$ as $t \to s$. This proves that $u$ is right continuous for the norm of $\mathcal{V}$. 

It remains to prove left continuity of $u$. We need a formula similar to \eqref{4444} but with $u(s)$
expressed in terms of $u(t)$. Fix $0 \le s < t \le \tau$ and set $v(r) :=e^{-(r-s)A(s)}u(r)$ for $r \in [s, t]$.  
Then 
$$v'(r) = -e^{-(r-s)A(s)}(A(s)+B(r)A(r)+P(r))u(r))+e^{-(r-s)A(s)}f(r),$$
and hence
\begin{align}\label{spetit}
u(s) &=e^{-(t-s)A(s)}u(t)  +\int_{s}^{t}e^{-(r-s)A(s)}(\mathcal{A}(s)+B(r)A(r))u(r) dr \nonumber
\\ &-\int_{s}^{t}e^{-(r-s)A(s)}[f(r)-P(r)u(r)] dr. 
\end{align}
Therefore 
\begin{align*}
u(s)-u(t)&=[e^{-(t-s)A(s)}u(t)-u(t)]+\int_{s}^{t}e^{-(r-s)A(s)}(\mathcal{A}(s)- A(r))u(r) dr \\
&+\int_{s}^{t}e^{-(r-s)A(s)}((B(r)+I) A(r)u(r)) dr  \\
&-\int_{s}^{t}e^{-(r-s)A(s)}[f(r)-P(r)u(r)] dr \\
&=: I_{1}(s,t)+I_{2}(s,t)+I_{3}(s,t)+I_{4}(s,t).
\end{align*} 
By Lemma \ref{lemreg11},
$$\|I_{4}(s,t)\|_{\mathcal{V}} \leq C [\|u\|_{L^{2}(s,t; \mathcal{H})}  + \|f\|_{L^{2}(s,t ; \mathcal{H})} ].$$ 
By Lemma \ref{lemreg11}
$$\|I_{3}(s,t)\|_{\mathcal{V}} \leq C \|A(\cdot) u (\cdot) \|_{L^{2}(s, t; \mathcal{H})}.$$ 
For $I_{2}(s,t) $ we have  immediately, 
$$\|I_{2}(s,t)\|_{\mathcal{V}} \leq C \int_{s}^{t} \frac{w(r-s)}{r-s} dr \|u\|_{L^{\infty}(0;\tau, \mathcal{V})}.$$
For $I_{1}(s,t) $ we proceed as before. We write
\begin{align*}
[e^{-(t-s)A(s)}u(t)-u(t)]&=[e^{-(t-s)A(s)}u(t)-e^{-(t-s)A(t)}u(t)]\\
&+[e^{-(t-s)A(t)}u(t)-u(t)]
\end{align*}
We use again the functional calculus as above to obtain 
$$\|[e^{-(t-s)A(s)}u(t)-e^{-(t-s)A(t)}u(t)]\|_{\mathcal{V}} \leq c \frac{\omega(t-s)}{(t-s)^{\varepsilon'}} \|u(t)\|_{\mathcal{V}}.$$
The remaining term $\|e^{-(t-s)A(t)}u(t)-u(t)\|_{\mathcal{V}}$ converges to $0$ as $s \to t$ by strong continuity of the semigroup on $\mathcal{V}$. 
We have proved that $u$ is left continuous in $\mathcal{V}$ and finally  $u \in C([0,\tau]; \mathcal{V})$. 
 \end{proof}
 
 \begin{prop}\label{prop44}
  Suppose that the assumptions of the previous theorem are satisfied. Let $f \in L_2(0, \tau; \mathcal{H})$, $ u_0 \in \mathcal{V}$ and $u$ be the solution  of \eqref{CPP}. Then 
$$s \rightarrow A(s)^{\frac{1}{2}}u(s)\in C([0,\tau]; \mathcal{H}).$$
\end{prop}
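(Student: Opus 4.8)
The plan is to prove right- and left-continuity of $s\mapsto A(s)^{1/2}u(s)$ separately, re-using the two semigroup representations \eqref{4444} and \eqref{spetit} that were already derived in the proof of Theorem \ref{contiV}; the point is that one must keep the smoothing factor $e^{-(t-s)A(\cdot)}$ rather than differencing the bare square roots. Fix $0\le s<t\le\tau$ and set $\delta:=t-s$. For right-continuity I would apply $A(t)^{1/2}$ to both sides of \eqref{4444} and compare with $A(s)^{1/2}u(s)$. By \eqref{reg1} one has $\|A(t)^{1/2}w\|\le c_1^{-1}\|w\|_{\mathcal V}$ for every $w\in\mathcal V$, so the two $\int_s^t$-terms of \eqref{4444} can be controlled by their $\mathcal V$-norms: Lemma \ref{lemreg11} handles the source term (with $g:=(I-B(\cdot))A(\cdot)u(\cdot)-P(\cdot)u(\cdot)+f\in L_2(0,\tau;\mathcal H)$, using that maximal regularity gives $A(\cdot)u(\cdot)\in L_2$), while the form-difference term is bounded by $C\int_0^{t-s}\frac{\omega(\rho)}{\rho}\,d\rho\,\|u\|_{L_\infty(0,\tau;\mathcal V)}$, exactly as in Theorem \ref{contiV}. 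Here I use $u\in C([0,\tau];\mathcal V)\subset L_\infty(0,\tau;\mathcal V)$ and the fact that \eqref{eqmain} forces $\int_0^\tau\frac{\omega(\rho)}{\rho}\,d\rho<\infty$. Both terms therefore tend to $0$ as $t\to s$, and it remains to treat the leading term $A(t)^{1/2}e^{-\delta A(t)}u(s)-A(s)^{1/2}u(s)$.

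I would split this leading term as
\begin{align*}
&A(t)^{1/2}e^{-\delta A(t)}u(s)-A(s)^{1/2}u(s)\\
&=\big[A(t)^{1/2}e^{-\delta A(t)}-A(s)^{1/2}e^{-\delta A(s)}\big]u(s)+\big[e^{-\delta A(s)}-I\big]A(s)^{1/2}u(s).
\end{align*}
The second bracket tends to $0$ because $A(s)^{1/2}u(s)\in\mathcal H$ is fixed and $(e^{-rA(s)})_{r\ge0}$ is strongly continuous on $\mathcal H$. The key estimate is for the first bracket. Using the holomorphic functional calculus on a sector $\Sigma(\theta)$ with $\theta\in(w_0,\tfrac\pi2)$ I write
$$A(t)^{1/2}e^{-\delta A(t)}-A(s)^{1/2}e^{-\delta A(s)}=\frac{1}{2\pi i}\int_\Gamma\lambda^{1/2}e^{-\delta\lambda}\big[(\lambda-A(t))^{-1}-(\lambda-A(s))^{-1}\big]\,d\lambda,$$
insert the resolvent identity $(\lambda-A(t))^{-1}-(\lambda-A(s))^{-1}=(\lambda-A(t))^{-1}(\mathcal A(t)-\mathcal A(s))(\lambda-A(s))^{-1}$, pair with $g\in\mathcal H$ and apply \eqref{eqeq0}. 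Since $u(s)\in\mathcal V=D(A(s)^{1/2})$, commuting $A(s)^{1/2}$ with the resolvent and using \eqref{reg1} gives $\|(\lambda-A(s))^{-1}u(s)\|_{\mathcal V_\beta}\le C(1+|\lambda|)^{-1}\|u(s)\|_{\mathcal V}$, while \eqref{eq3-8} gives $\|(\overline\lambda-A(t)^*)^{-1}g\|_{\mathcal V_\gamma}\le C(1+|\lambda|)^{-(1-\gamma/2)}\|g\|$. This yields
$$\big\|\big[A(t)^{1/2}e^{-\delta A(t)}-A(s)^{1/2}e^{-\delta A(s)}\big]u(s)\big\|\le C\,\omega(\delta)\,\|u(s)\|_{\mathcal V}\int_0^\infty\frac{|\lambda|^{1/2}e^{-c\delta|\lambda|}}{(1+|\lambda|)^{2-\frac{\gamma}{2}}}\,d|\lambda|.$$

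The main obstacle — and the precise reason the hypothesis $\omega(t)\le ct^\varepsilon$ is needed — is the behaviour of this last integral as $\delta\to0$. Near $|\lambda|=0$ the integrand is $\sim|\lambda|^{1/2}$ and harmless; near infinity it behaves like $|\lambda|^{-3/2+\gamma/2}e^{-c\delta|\lambda|}$. For $\gamma<1$ this is integrable uniformly in $\delta$, so the expression is $O(\omega(\delta))\to0$; exactly at the borderline $\gamma=1$ the integral would diverge logarithmically without the cutoff, but with the factor $e^{-c\delta|\lambda|}$ it is $O(\log(1/\delta))=O(\delta^{-\varepsilon'})$ for every $\varepsilon'>0$, and choosing $\varepsilon'<\varepsilon$ the bound $\omega(\delta)\le c\delta^\varepsilon$ gives $\omega(\delta)\,\delta^{-\varepsilon'}\to0$. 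This is where keeping the semigroup (i.e. working from \eqref{4444} instead of the naked difference $A(t)^{1/2}-A(s)^{1/2}$) is essential. This establishes right-continuity. Left-continuity follows by the same scheme, starting from \eqref{spetit}, applying $A(s)^{1/2}$, disposing of the $\int_s^t$-terms by Lemma \ref{lemreg11} as above, and treating the leading term $A(s)^{1/2}e^{-\delta A(s)}u(t)-A(t)^{1/2}u(t)$ by the identical functional-calculus estimate with the roles of $s$ and $t$ interchanged. Combining the two gives $s\mapsto A(s)^{1/2}u(s)\in C([0,\tau];\mathcal H)$.
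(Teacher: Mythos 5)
Your proof is correct and follows essentially the same route as the paper's: the same representations \eqref{4444} and \eqref{spetit}, the same reduction of the two integral terms to the $\mathcal{V}$-norm estimates from Theorem \ref{contiV} via \eqref{reg1} and Lemma \ref{lemreg11}, the same splitting of the leading term, and the same functional-calculus/resolvent-identity bound combined with $\omega(\delta)\le c\delta^{\varepsilon}$ to absorb the logarithmic loss. Your only deviation is a mild sharpening: by pairing with $g$ and invoking the $\mathcal{V}_\beta$ and $\mathcal{V}_\gamma$ bounds you obtain the integrand $|\lambda|^{1/2}(1+|\lambda|)^{-(2-\frac{\gamma}{2})}$, so for $\gamma<1$ the leading term is even $O(\omega(\delta))$ with no $\delta^{-\varepsilon'}$ factor, whereas the paper's coarser $\mathcal{L}(\mathcal{V},\mathcal{V}')$ estimate always produces the logarithm.
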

\begin{proof}
We use again \eqref{4444} and write
\begin{align*}
&A(t)^{\frac{1}{2}}u(t)-A(s)^{\frac{1}{2}}u(s)\\
&=A(t)^{\frac{1}{2}}e^{-(t-s)A(t)}u(s)-A(s)^{\frac{1}{2}}u(s)\\
&+ A(t)^{\frac{1}{2}} \int_{s}^{t} { e^{-(t-r)A(t)}(\mathcal{A}(t)-\mathcal{A}(r))u(r)dr}\\
&+A(t)^{\frac{1}{2}} \int_{s}^{t} { e^{-(t-r)A(t)}[(-B(r)+I)A(r)u(r)-P(r)u(r)+f(r)]}dr.
\end{align*}
By \eqref{reg1}, the norms in $\mathcal{H}$ of the last two terms are equivalent to the norms in $\mathcal{V}$ of the same terms but without 
$A(t)^{\frac{1}{2}}$. We have seen in the proof of Theorem  \ref{contiV} that these norms in $\mathcal{V}$ converge to $0$ as $t \to s$ or as $s\to t$. It remains to consider the  term $A(t)^{\frac{1}{2}}e^{-(t-s)A(t)}u(s)-A(s)^{\frac{1}{2}}u(s).$
We use again  the functional calculus to write
\begin{align*}
&A(t)^{\frac{1}{2}}e^{-(t-s)A(t)}u(s)-A(s)^{\frac{1}{2}}u(s)\\
&=A(t)^{\frac{1}{2}}e^{-(t-s)A(t)}u(s)-A(s)^{\frac{1}{2}}e^{-(t-s)A(s)}u(s)\\ 
&\hspace{.5cm} +A(s)^{\frac{1}{2}}e^{-(t-s)A(s)}u(s)-A(s)^{\frac{1}{2}}u(s)\\
&=\frac{1}{2 \pi i} \int_{\Gamma} \lambda^{\frac{1}{2}}e^{-(t-s)\lambda}[(\lambda-A(t))^{-1}-(\lambda-A(s))^{-1}]u(s) d\lambda\\
&\hspace{.5cm} + A(s)^{\frac{1}{2}}e^{-(t-s)A(s)}u(s)-A(s)^{\frac{1}{2}}u(s).
\end{align*}
By the resolvent equation
$$(\lambda-A(t))^{-1}-(\lambda-A(s))^{-1} = (\lambda -A(t))^{-1} (\mathcal{A}(s) - \mathcal{A}(t)) (\lambda -A(s))^{-1}$$
and Proposition \ref{prop3-1} we have 
\begin{align*}
&\| (\lambda -A(t))^{-1} - (\lambda -A(s))^{-1} \| \\
& \le  \|(\lambda-A(t))^{-1} \|_{ \mathcal{L(V',H)}} \| (\mathcal{A}(s) - \mathcal{A}(t)) \|_\mathcal{L(V,V')} \|(\lambda-A(s))^{-1} \|_{ \mathcal{L(V)}}\\
&\le C | \lambda |^{-1/2} \omega(|t-s|) \frac{1}{ 1 + |\lambda|}.
\end{align*}
Therefore for any $\varepsilon > 0$, 
\begin{align*}
& \|A(t)^{\frac{1}{2}}e^{-(t-s)A(t)}u(s)-A(s)^{\frac{1}{2}}e^{-(t-s)A(s)}u(s)\|\\
 & \leq C \omega(|t-s|) \int_{0}^{\infty} \frac{1}{1 + r} e^{-(t-s)r}dr \|u(s)\|_\mathcal{V}\\
& \leq C_\varepsilon  \frac{\omega(|t-s|)}{ |t-s|^{\varepsilon'}}  \|u(s)\|_\mathcal{V}.
\end{align*}
Remember  that $u \in L_\infty(0, \tau ; \mathcal{V})$ by Theorem \ref{contiV}.  Using the assumption on $\omega$,
the latest term converges to $0$ as $|t-s| \to 0$.  The  term $ A(s)^{\frac{1}{2}}e^{-(t-s)A(s)}u(s)-A(s)^{\frac{1}{2}}u(s)$ converges to $0$ as $t  \to s$ by the strong continuity of the semigroup. 
This proves the right continuity of $s \mapsto A(s)^{\frac{1}{2}}u(s)$. The left continuity is proved similarly, we  use 
\eqref{spetit} instead of \eqref{4444}. 
\end{proof}


\section{Right perturbations-Maximal regularity}\label{sec4}
 Let $B(t)$ and $P(t)$ ($t\in[0,\tau]$) be bounded operators on $\mathcal{H}$. We 
 investigate the maximal $L_p$-regularity property for right multiplicative perturbations $A(t) B(t)$. As mentioned in the introduction, this problem has been considered in \cite{JL15} and was motivated there by several applications. We will extend the results from \cite{JL15} in the sense that we  require much less regularity for  $t \mapsto \fra(t)$.
 
Let $\fra(t)$ be a family of sesquilinear forms satisfying again [H1]-[H3] and denote as before $A(t)$ the corresponding associate operators. Under the assumptions of Theorem \ref{thmain}, for each $t$, the operator $-B(t)A(t)$ generates a holomorphic semigroup on $\mathcal{H}$. The same is also true for $-B(t)^* A(t)^*$ since the adjoint operators  $B(t)^*$ and $A(t)^*$ satisfy the same properties as $B(t)$ and $A(t)$. Hence by duality,  $-A(t)B(t)$ generates a holomorphic semigroup on $\mathcal{H}$. Here, the domain of $A(t)B(t)$ is given by 
$$D(A(t)B(t))=\{x \in \mathcal{H},  B(t)x \in D(A(t))\}.$$
For right perturbations, we say that the Cauchy problem 
$$u'(t) + A(t)B(t) u(t) + P(t) u(t) = f(t), \ \ u(0) = u_0$$
has maximal $L_p$-regularity if for every $f \in L_p(0, \tau, \mathcal{H})$ there exists a unique 
$u \in W^1_p(0, \tau; \mathcal{H})$, $B(t)u(t) \in D(A(t))$ a.e. and $u$ satisfies the Cauchy problem in the $L_p$-sense.  

Our main result in this section is the following. 

\begin{teo}\label{thmright}
Let  $(\fra(t))_t$ satisfy  [H1]-[H3] and  $B(t)$ and $P(t)$ be bounded operators  satisfying  \eqref{eq3-4-1}-\eqref{eq3-4-3} and 
\eqref{eq3-4-5}.  We suppose that $t \mapsto B(t) $ is Lipschitz  and  $t \mapsto P(t)$ is  strongly measurable.  
Suppose that for some $\beta, \gamma \in [0, 1]$
\begin{equation*}
|\fra(t, u,v)- \fra(s, u,v)| \leq \omega(|t-s|)\|u\|_{\mathcal{V}_{\beta}}  \|v\|_{\mathcal{V}_{\gamma}}, \ u, v \in \mathcal{V}
\end{equation*}
where $\omega: [0,\tau] \to  [0,\infty)$ is a non-decreasing function such that :
$$\int_0^{\tau} \frac{\omega(t)}{t^{1+\frac{\gamma}{2}}}dt <\infty.$$
Then the Cauchy problem 
\begin{equation}\label{CPright}
u'(t) + A(t)B(t)u(t)  + P(t)u(t) = f(t),  \ \ u(0) = 0
\end{equation}
 has maximal $L_p$-regularity in $\mathcal{H}$ for all $p \in (1, \infty)$. \\
If in addition, 
\begin{equation}\label{eq3-4-6} 
\int_{0}^{\tau}\frac{\omega(t)^{p}}{t^{\frac{1}{2}(\beta+p \gamma)}}dt<\infty
\end{equation}
then 
\begin{equation}\label{CPPright}
u'(t) + A(t)B(t)u(t)  + P(t)u(t) = f(t), \ \ u(0) = u_0
\end{equation}
 has maximal $L_p$ regularity in $\mathcal{H}$ provided $u_0 \in B(0)^{-1}(\mathcal{H}, D(A(0)))_{1-\frac{1}{p},p}$. 
 Moreover there exists a positive constant C such that :
\begin{eqnarray}\label{aprioriR}
&&\| u \|_{W^1_p(0,\tau; \mathcal{H})} +\|A(\cdot)  B(\cdot) u(\cdot) \|_{L_p(0,\tau; \mathcal{H})}\\ \nonumber
&& \  \ \leq C \left[\|f\|_{L_p(0,\tau; \mathcal{H})}+\|B(0)u_0\|_{( \mathcal{H}, D(A(0)))_{1-\frac{1}{p}, p}} \right].
\end{eqnarray}
\end{teo}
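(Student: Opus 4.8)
The plan is to reduce the right-perturbation problem \eqref{CPPright} to a left-perturbation problem of the type already settled in Theorem \ref{thmain}, by means of the substitution $w(t) := B(t)u(t)$. If $u$ solves \eqref{CPPright}, then $u(t) = B(t)^{-1}w(t)$ and, using $A(t)B(t)u(t) = A(t)w(t)$ together with $u'(t) = f(t) - A(t)w(t) - P(t)B(t)^{-1}w(t)$, a direct computation gives
\begin{equation*}
w'(t) + B(t)A(t)w(t) + \widetilde{P}(t)w(t) = B(t)f(t),
\end{equation*}
with $\widetilde{P}(t) := B(t)P(t)B(t)^{-1} - B'(t)B(t)^{-1}$. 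This is exactly a left multiplicative perturbation problem for the family $B(t)A(t)$, with additive perturbation $\widetilde{P}(t)$ and right-hand side $B(t)f(t)$. It is precisely here that the Lipschitz assumption on $t \mapsto B(t)$ enters: it guarantees that $B'(t)$ exists for a.e.\ $t$ with $\sup_t\|B'(t)\|_{\mathcal{L(H)}} < \infty$, so that the product rule holds in the $L_p$-sense and $\widetilde{P}(t)$ is a well-defined bounded operator. For the left problem of Theorem \ref{thmain} mere continuity of $B$ sufficed; the extra derivative of $B$ is the price paid for the transformation. The semigroup generation for $A(t)B(t)$ is already established by duality in the discussion preceding the theorem, so only the maximal regularity statement needs work.

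First I would verify that the transformed problem meets all the hypotheses of Theorem \ref{thmain}. The forms $\fra(t)$ and the operators $B(t)$ are untouched, hence [H1]-[H3], the conditions \eqref{eq3-4-1}-\eqref{eq3-4-2}, the angle condition \eqref{eq3-4-5}, the continuity of $B$, and the regularity hypotheses on $\omega$ all persist. The new additive term satisfies $\|\widetilde{P}(t)\|_{\mathcal{L(H)}} \le \tfrac1\delta M^2 + LM$ by \eqref{eq3-4-4} and the Lipschitz constant $L$ of $B$, and it is strongly measurable, which is all that an additive perturbation must fulfil. Since $B$ and $B^{-1}$ are Lipschitz with values in $\mathcal{L(H)}$, the map $u \mapsto w = B(\cdot)u(\cdot)$ is a bijection of $W^1_p(0,\tau;\mathcal{H})$ onto itself, with $w(t) = B(t)u(t) \in D(A(t))$ a.e.\ and $A(\cdot)w(\cdot) = A(\cdot)B(\cdot)u(\cdot)$; hence maximal $L_p$-regularity of the two problems is equivalent. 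The initial data correspond through $w(0) = B(0)u_0$, so $w(0) \in (\mathcal{H},D(A(0)))_{1-\frac1p,p}$ is equivalent to $u_0 \in B(0)^{-1}(\mathcal{H},D(A(0)))_{1-\frac1p,p}$, matching the hypothesis of the theorem exactly; when $u_0 = 0$ one has $w(0)=0$ and only the first integral condition on $\omega$ is needed.

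With these observations Theorem \ref{thmain} applies to $w$: for $u_0 = 0$ it yields maximal $L_p$-regularity of \eqref{CPright} at once, and under the additional condition \eqref{eq3-4-6} it produces the solution $w$ with $w(0) = B(0)u_0$ together with the estimate $\|w\|_{W^1_p} + \|A(\cdot)w(\cdot)\|_{L_p} \le C[\,\|B(\cdot)f\|_{L_p} + \|B(0)u_0\|_{(\mathcal{H},D(A(0)))_{1-\frac1p,p}}\,]$. Transferring back through $u = B(\cdot)^{-1}w(\cdot)$ and using $\|B(t)\|_{\mathcal{L(H)}} \le \tfrac1\delta$ together with the Lipschitz bound on $B^{-1}$ then gives the a priori estimate \eqref{aprioriR}, since $A(\cdot)B(\cdot)u(\cdot) = A(\cdot)w(\cdot)$ and $\|u\|_{W^1_p} \le C\|w\|_{W^1_p}$. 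Uniqueness for $u$ follows from uniqueness for $w$.

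The main obstacle I anticipate is the rigorous justification of the transformation when $B$ is only Lipschitz in the operator norm: because $\mathcal{L(H)}$ need not have the Radon--Nikodym property, $B'(t)$ may fail to exist as an a.e.\ Fr\'echet derivative in $\mathcal{L(H)}$. I would circumvent this by applying the difference quotients of $B(\cdot)$ to the fixed vectors $u(t)$, for which $t \mapsto B(t)x$ is genuinely Lipschitz in $\mathcal{H}$ and hence a.e.\ differentiable, or alternatively by approximating $B$ by smooth families $B_n$, establishing the identity and estimates for each $B_n$, and passing to the limit using the uniform bounds and the stability of the constants in Theorem \ref{thmain}. Everything else reduces to routine book-keeping of the equivalence between the two Cauchy problems.
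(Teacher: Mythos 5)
Your proposal is correct and follows essentially the same route as the paper: the paper likewise conjugates by $B(t)$, solving the left-perturbation problem $v'+B(t)A(t)v-B'(t)B(t)^{-1}v+B(t)P(t)B(t)^{-1}v=B(t)f$ with $v(0)=B(0)u_0$ via Theorem \ref{thmain} and then setting $u=B(\cdot)^{-1}v$, which is exactly your substitution $w=B(\cdot)u$ read in the opposite direction. Your caveat about a.e.\ differentiability of the Lipschitz family $B(\cdot)$ is handled in the paper by Lemma \ref{lem4-1} in the same strong (vector-wise) sense you suggest, so no genuine gap remains.
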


We start with the following lemma. 
\begin{lem}\label{lem4-1}
Under the above assumptions on $B(t)$, the mapp $t \mapsto B(t)^{-1}x$ is differentiable on $(0, \tau)$ with values in 
$\mathcal{L(H)}$ and 
$$\frac{d}{dt} B(t)^{-1}x = -B(t)^{-1} B'(t) B(t)^{-1} x$$
for all $x \in \mathcal{H}$. 
\end{lem}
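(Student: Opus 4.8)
The plan is to reduce the statement to the elementary identity relating the difference of two operator inverses, and then to pass to the limit in the difference quotient. First I would record, for any pair of invertible bounded operators, the algebraic identity
\[
B(t+h)^{-1} - B(t)^{-1} = -\,B(t+h)^{-1}\bigl(B(t+h)-B(t)\bigr)B(t)^{-1},
\]
which is verified at once by multiplying out. Dividing by $h\neq0$ and applying the result to a fixed $x\in\mathcal{H}$ yields
\[
\frac{B(t+h)^{-1}x - B(t)^{-1}x}{h} = -\,B(t+h)^{-1}\,\frac{\bigl(B(t+h)-B(t)\bigr)}{h}\,B(t)^{-1}x ,
\]
so that everything reduces to showing that the right-hand side converges, as $h\to0$, to $-B(t)^{-1}B'(t)B(t)^{-1}x$.

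Before taking the limit I would check that $t\mapsto B(t)^{-1}$ is itself continuous (indeed Lipschitz) with values in $\mathcal{L(H)}$. Applying the same identity together with the uniform bound $\|B(\cdot)^{-1}\|_{\mathcal{L(H)}}\le M$ from \eqref{eq3-4-2} gives
\[
\|B(t)^{-1}-B(s)^{-1}\|_{\mathcal{L(H)}} \le \|B(t)^{-1}\|_{\mathcal{L(H)}}\,\|B(t)-B(s)\|_{\mathcal{L(H)}}\,\|B(s)^{-1}\|_{\mathcal{L(H)}} \le M^{2}\,\|B(t)-B(s)\|_{\mathcal{L(H)}},
\]
and the Lipschitz continuity of $B$ (our standing hypothesis) shows in particular that $B(t+h)^{-1}\to B(t)^{-1}$ in $\mathcal{L(H)}$ as $h\to0$.

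Finally I would pass to the limit. Write $D_h := h^{-1}\bigl(B(t+h)-B(t)\bigr)$; since $B$ is Lipschitz with constant $L$ we have $\|D_h\|_{\mathcal{L(H)}}\le L$, and for any fixed $y\in\mathcal{H}$ the $\mathcal{H}$-valued map $t\mapsto B(t)y$ is Lipschitz, hence differentiable at every point where $B$ is (strongly) differentiable, which, as $\mathcal{H}$ is reflexive and therefore has the Radon--Nikodym property, is almost every $t$; at each such point $D_h y\to B'(t)y$. Taking $y=B(t)^{-1}x$, the only point genuinely needing care is the passage to the limit in the product, which I would handle by the splitting
\[
B(t+h)^{-1}D_h y - B(t)^{-1}B'(t)y = \bigl[B(t+h)^{-1}-B(t)^{-1}\bigr]D_h y + B(t)^{-1}\bigl[D_h y - B'(t)y\bigr].
\]
The first term is bounded by $\|B(t+h)^{-1}-B(t)^{-1}\|_{\mathcal{L(H)}}\,L\,\|y\|$, which tends to $0$ by the norm continuity of the inverse established above; the second is bounded by $M\,\|D_h y - B'(t)y\|$, which tends to $0$ by the strong differentiability of $B$. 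Hence the right-hand side converges to $-B(t)^{-1}B'(t)B(t)^{-1}x$, giving $\frac{d}{dt}B(t)^{-1}x = -B(t)^{-1}B'(t)B(t)^{-1}x$ and completing the proof.
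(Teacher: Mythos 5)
Your proposal is correct and follows essentially the same route as the paper: the paper's proof consists precisely of the identity $B(t+h)^{-1}-B(t)^{-1}=-B(t+h)^{-1}\bigl(B(t+h)-B(t)\bigr)B(t)^{-1}$, the uniform bound on $B(t+h)^{-1}$ giving norm convergence of the inverses, and the Lipschitz continuity of $t\mapsto B(t)x$. You merely spell out the details the paper leaves implicit (the $M^{2}$ Lipschitz bound for the inverse, the splitting in the product limit, and the correct observation that the derivative $B'(t)$ exists only almost everywhere, which is the sense in which the lemma is used anyway).
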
 
\begin{proof}
We write
\begin{equation}\label{eq4-1}
B(t+h)^{-1} - B(t)^{-1} = - B(t+h)^{-1} ( B(t+h) - B(t) ) B(t)^{-1}
\end{equation}
and since  $B(t+h)^{-1}$ has norm bounded with respect to $h$ it follows that  $B(t+h)^{-1}$ converges uniformly to $B(t)^{-1}$. Using this and the fact that $t \mapsto B(t)x$ is Lipschitz we obtain the lemma. \end{proof}

\begin{proof}[Proof of Theorem \ref{thmright}] 
Let $f \in L_p(0, \tau; \mathcal{H})$ and initial data  $u_0$ such that $u_0 \in B(0)^{-1}(\mathcal{H}, D(A(0)))_{1-\frac{1}{p},p}$. 
 We consider the Cauchy problem with left multiplicative perturbations
\begin{equation}\label{eq4-2}
 \left\{
\begin{array}{l}
v'(t)+B(t)A(t)v(t)- B'(t)B(t)^{-1}v(t) + B(t) P(t) B(t)^{-1}v(t) = B(t)f(t) \\ 
v(0)=B(0)u_0.
\end{array}
\right.
\end{equation}
Note that $B(\cdot) f(.) \in L_p(0,\tau; \mathcal{H})$  and $B(0) u_0 \in (\mathcal{H}, D(A(0)))_{1-\frac{1}{p},p}$. Note also that $t \mapsto - B'(t)B(t)^{-1} + B(t) P(t) $ is strongly measurable  with values in $\mathcal{L(H)}$. Thus, we can apply Theorem \ref{thmain}. We obtain existence and uniqueness of  
$v \in W^{1}_p(0, \tau; \mathcal{H})$ such that 
$v(t) \in D(A(t))$ for a.e. $t \in [0, \tau]$ which satisfies \eqref{eq4-2}.
We set  $u(t) := B(t)^{-1}v(t)$. Using Lemma \ref{lem4-1} we check easily that  $u \in W^1_p(0, \tau; \mathcal{H})$, $B(t)u(t) \in D(A(t))$ for a.e. $t$ and it  is the unique solution of \eqref{CPright}. Finally, \eqref{aprioriR} follows immediately from the a priori estimate of Theorem \ref{thmain}. 
\end{proof}

Note that we may consider both left and right multiplicative perturbations at the same time.  Let $B_0(t)$ and $B_1(t)$ be bounded operators satisfying the same assumptions  \eqref{eq3-4-1} and  \eqref{eq3-4-2}. We assume that $t \mapsto B_0(t)$ is continuous  and $t \mapsto B_1(t)$ is Lipschitz continuous on $[0, \tau]$. We assume that the forms $\fra(t)$ and $P(t)$ are as in Theorem \ref{thmright}. We consider the Cauchy problem
\begin{equation}\label{leftright}
u'(t) + B_0(t) A(t) B_1(t) u(t) + P(t) u(t) = f(t), \ \ u(0) = u_0.
\end{equation}
Then the maximal $L_p$-regularity results  of Theorem \ref{thmright} hold for \eqref{leftright} for initial data
$u_0 \in B_1(0)^{-1}(\mathcal{H}, D(A(0)))_{1-\frac{1}{p},p}$. The proof is very similar to the previous one. We consider the Cauchy problem with left perturbations
\begin{equation}\label{eq4-2-11}
 \left\{
\begin{array}{l}
v'(t)+B_1(t)B_0(t)A(t)v(t)- B_1'(t)B_1(t)^{-1}v(t) + B_1(t) P(t)B_1(t)^{-1} v(t)\\
\hspace{.8cm} =  B_1(t)f(t) \\ 
v(0)=B_1(0)u_0.
\end{array}
\right.
\end{equation}
We obtain the maximal $L_p$-regularity for \eqref{eq4-2-11} by Theorem \ref{thmain} and set as above $u(t) = B_1(t)^{-1} v(t)$.

\end{document}